\def\GL{{\rm GL}}
\def\diag{{\rm diag}}
\newtheorem{theorem}{Theorem}
\newtheorem*{hypothesis} {Hypothesis}
\newtheorem{lemma}{Lemma}
\newtheorem{fact}{Fact}
\begin{document}

\title[Adequacy of nonsingular matrices]{ Adequacy of Nonsingular Matrices over Commutative Principal Ideal Domains}

\author{V.~Bovdi, V.~ Shchedryk}
\address[\texttt{V.~Bovdi}]
{United Arab Emirates University, UAE}
\email{vbovdi@gmail.com}
\address[\texttt{V.~ Shchedryk}]
{Pidstryhach Institute for Applied Problems of Mechanics and Mathematics,  National Academy of Sciences of Ukraine, Lviv,  Ukraine}
\email{shchedrykv@ukr.net}
\keywords{ adequate ring, principal ideal domain, divisors of a matrices}
\subjclass{15A23, 19D10,   16U30, 15A24}

\begin{abstract}
The notion of adequacy for commutative domains was introduced by Helmer in \textit{Bull. Amer. Math. Soc.}, \textbf{49} (1943), 225--236.
In the present paper, we extend the concept of adequacy to noncommutative B\'ezout rings.
We show that the set of nonsingular $2 \times 2$ matrices over a commutative principal ideal domain is adequate.
\end{abstract}
\maketitle

\section{Introduction and results}
Let $U(R)$ be the group of units of an associative,  commutative ring $R$ with $1\not=0$. The elements $a, b\in R$ are called  {\it strongly associated} if there exists  $e \in U(R)$ such that $a=be$ (see \cite[Definition 2.1, p.\,441]{Anderson_Valdes_Leon} and  \cite{Anderson_Valdes_Leon_2}).
The set of all non strongly  associate elements of the ring $R$ is denoted by $R^{*}$.  Of course,  we always assume   $1\in R^{*}$.
The matrix  $\diag(d_1,\ldots, d_n)$ means  a  matrix having $d_1,\ldots, d_n\in R$ on the  main diagonal and zeros elsewhere (by the main diagonal we mean the one beginning at the upper left corner). The set of all matrices of size $n\times m$ over a ring $R$ is denoted by $R^{n\times m}$.

A commutative ring $R$ is called an {\it elementary divisor ring}~\cite[p.\,465]{Kaplansky} if, for each matrix $A \in R^{n \times m}$, there exist invertible matrices $P_{A}$ and $Q_{A}$ such that
\begin{equation}\label{E:1}
P_{A} A Q_{A} = \diag(\alpha_{1}, \ldots, \alpha_{s}) \in R^{n \times m},
\end{equation}
where $s := \min(n, m)$ and each $\alpha_{i}$ divides $\alpha_{i+1}$ for $i = 1, \ldots, s - 1$.
The diagonal matrix $\diag(\alpha_{1}, \ldots, \alpha_{s})$ is called  a  {\it Smith form} of $A$ (unique up to strong associates of its diagonal elements).
Accordingly, we can always choose $\alpha_{1}, \ldots, \alpha_{s} \in R^{*}$ so that the matrix $\diag(\alpha_{1}, \ldots, \alpha_{s})$ is uniquely defined; it is called  the {\it Smith normal form} of the matrix $A$ and is denoted by $\mathrm{SNF}(A)$.
The matrices $P_{A}$ and $Q_{A}$ (see~\eqref{E:1}) are called the {\it left} and {\it right transforming} matrices of $A$, respectively.
The sets of all left and right transforming matrices of  $A \in R^{n \times n}$ with the Smith normal form  $\Phi:=\diag(\alpha_{1}, \ldots, \alpha_{n})$ have the form of  right and left  cosets $\mathbf{G}_\Phi P_A $ and $  Q_A\mathbf{G}_\Phi^T$ by the subgroups $\mathbf{G}_\Phi, \mathbf{G}_\Phi^T<\mathrm{GL}_n(R)$, respectively. Here   $\mathbf{G}_\Phi$ is the Zelisko group \cite{ZZ1, ZZ2, Zelisko} of the matrix $\Phi$, defined as
\[
\mathbf{G}_\Phi := \left\{ H \in \mathrm{GL}_n(R) \mid \exists S \in \mathrm{GL}_n(R) \text{ such that } H\Phi = \Phi S \right\}
\]
and $\mathbf{G}_\Phi^T:=\{H^T\mid H\in \mathbf{G}_\Phi\}$.


The {\it greatest common divisor} and the {\it least common multiple} of $a, b \in R$, which are unique up to strong associates, are denoted by $(a, b)$ and $[a, b]$, respectively; and $a \mid b$ means that $a$ is a divisor of $b$.

Let $R$ be a commutative  domain with $1\not=0$ in which every finitely generated ideal is principal (B\'ezout domain).
Let $a, b \in R$, and $b \neq 0$.
Under  a {\it relatively prime part} of $b$ with respect to $a$
written $RP(a, b)$,  we have in mind a factor $t$ of $b$ such that, if
$b =s t$, then
\begin{itemize}
\item[(i)] $(t, a) = 1$;
\item[(ii)] $(s', a) \neq 1$ for any non-unit factor $s'$ of $s$.
\end{itemize}

The element  $s$ (if it exists) is called  an   {\it adequate part} of $b$ with respect to $a$.
A  ring $R$ is called {\it adequate} \cite[p.\,225]{Helmer} if ${RP}(a, b)$ exists for all $a, b \in R$ with $b \neq 0$.
This concept is essentially a formalization of properties of   entire analytic functions rings.
Each commutative principal ideal domain (PID) is  adequate, but the converse is not true, in particular, the ascending
chain condition on ideals
may not be satisfied.  Each adequate ring is an elementary divisor ring  \cite[Theorem 3, p.\,234]{Helmer}. The ring of all continuous real-valued functions defined on a completely regular (Hausdorff) space $X$ is an example of an adequate ring, which is regular and  every prime ideal is maximal \cite[Corollaries  3.6,3.8 p.\,386]{Gillman_Henriksen_2}.  Each local ring as well as each commutative  von Neumann regular ring is adequate \cite[Theorem 11, p.\,365]{Gillman_Henriksen}. Adequate rings with zero-divisors in their Jacobson radical were investigated   by Kaplansky  \cite[Theorem 5.3,  p.\,473]{Kaplansky}.
Note that not every elementary divisor ring is adequate \cite[Corollary 6.7, p.\,386]{Gillman_Henriksen_2} and  in an adequate domain each nonzero prime ideal is contained in a unique maximal ideal \cite[Corollary 6.6, p.\,386]{Gillman_Henriksen_2}.
B\'ezout rings in which each regular element is adequate were  investigated in  \cite{Zabavsky_Gatalevych_2}.   Moreover, generalized adequate rings were introduced in~\cite{Zabavskii},  forming a new class of elementary divisor rings that includes adequate rings as a subclass.

Gatalevych~\cite{Gatalevich} was the first to attempt applying the notion of adequacy to noncommutative rings.
He introduced a new concept of adequacy for noncommutative rings and proved that a generalized right adequate (in the sense of Gatalevych) duo Bezout domain is an elementary divisor domain~\cite[Theorem~2, p.\,117]{Gatalevich}.
In the present article, we propose an alternative definition of adequate rings, which differs from the one introduced by Gatalevych~\cite[Definition~1, p.\,116]{Gatalevich}.
Using an example in \S4, we demonstrate certain advantages of our definition.
Our  definition of the adequacy of a ring is the following:

Let $K$ be a B\'ezout (not necessarily commutative) ring with  $1\not=0$.   An element $0 \neq b\in K$  is called  {\it left adequate} to  $a\in K$ if
either
$aR+bR=R$
or,  if  $aR+bR\neq R$ then
there exists $s $ such that $b=st$ and the  following conditions hold:
\begin{itemize}

\item[(i)] $s'K+aK\ne K$ for each $s'\in K$ such that $sK\subset s'K\ne K$;

\item[(ii)]   for each  $t'\in K$ such that $tK\subset t'K\ne K$  there exists a decomposition $st'=pq$ such that $pK+aK=K$.
\end{itemize}

An element $s$ is called a {\it left adequate part} of $b$ with respect to $a$.
The right adequate part of $b$ with respect to $a$ is defined by analogy.

A subset $A \subseteq K$ is called \textit{left} (respectively, \textit{right}) \textit{adequate} if each of its nonzero elements is left (respectively, right) adequate to all elements of $A$.
If each nonzero element of $A$ is both left and right adequate to all elements of $A$, then the set $A$ is called \textit{adequate}.

It is easy to see that if $K$ is a commutative PID, then our definition coincides with the one given by Helmer \cite[p.\,225]{Helmer}.

\smallskip
Our first main result is the following:

\begin{theorem} \label{T:1}
Let  $R$ be a commutative PID  such that $1\not=0$.  The set of  nonsingular $ 2\times 2$ matrices over $R$ is an adequate set.
\end{theorem}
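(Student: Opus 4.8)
The plan is to translate the statement into the geometry of lattices in $R^2$ and then to build the required factorization one prime at a time. For nonsingular $X\in K=R^{2\times 2}$ put $XR^2:=\{Xv:v\in R^2\}\subseteq R^2$; then $XK\subseteq YK$ holds exactly when $XR^2\subseteq YR^2$ (that is, $Y$ is a left divisor of $X$), one has $X\in U(K)=\GL_2(R)$ iff $XR^2=R^2$, and $XK+YK=K$ iff $XR^2+YR^2=R^2$. Since $R$ is a PID, every finitely generated right ideal of $K$ arises as some $XK$, so $K$ is a (left and right) B\'ezout ring, and the whole question is encoded by the lattice $[BR^2,R^2]$ of left divisors of $B$ together with the single lattice $AR^2$. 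I would also note that adequacy of $B$ to $A$ depends only on the lattices $AR^2$ and $BR^2$: it is unchanged under $A\mapsto AU$, $B\mapsto BV$ with $U,V\in\GL_2(R)$ and under a common left multiplication $A\mapsto WA$, $B\mapsto WB$; and since transposition is an anti-automorphism of $K$, it interchanges left and right adequacy. Hence it suffices to establish left adequacy for every ordered pair of nonsingular matrices, i.e. to produce one factorization $B=ST$.

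Next I would recast the two defining conditions lattice-theoretically. Call a maximal sublattice $N\subsetneq R^2$ \emph{coprime} if $AR^2\not\subseteq N$ and \emph{non-coprime} if $AR^2\subseteq N$. Condition (i) for $S$ then reads: every maximal $N$ with $SR^2\subsetneq N$ is non-coprime, i.e. the image of $AR^2$ in $R^2/SR^2$ lies in the radical. Using the order isomorphism $T'\mapsto ST'R^2$ between left divisors of $T$ and lattices $L$ with $BR^2\subseteq L\subseteq SR^2$, condition (ii) reads: for every $L$ with $BR^2\subsetneq L\subsetneq SR^2$ there is a coprime maximal $N$ with $L\subseteq N$. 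A maximal sublattice lives over a single prime, so the predicate $AR^2\subseteq N$ is tested after localizing there; at primes $p\nmid\det A$ one has $AR_p^2=R_p^2$, making every maximal sublattice coprime, and the genuine constraints sit at the finitely many primes dividing $\det A$.

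Then I would construct $S$ prime by prime and glue through the primary decomposition $R^2/BR^2=\bigoplus_p(R^2/BR^2)_p$. At $p\nmid\det A$ I set $SR_p^2=R_p^2$, sending all $p$-content of $B$ into $T$. At $p\mid\det A$ I work over the discrete valuation ring $R_p$ with the pair $AR_p^2,BR_p^2$ in relative normal form; for $2\times 2$ this involves only the two elementary divisors of each lattice and one relative-position parameter, so the analysis is finite. The governing dichotomy is whether $AR_p^2\subseteq pR_p^2$. If it does, then every maximal sublattice is non-coprime, so (i) is automatic and (ii) forces $SR_p^2$ to be an atom immediately above $BR_p^2$ (or $BR_p^2$ itself). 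If it does not, then a single maximal sublattice is non-coprime, and I take $SR_p^2$ inside that one and inside no coprime maximal sublattice; the smallest such lattice above $BR_p^2$ has cyclic quotient, so (i) holds, while every proper step below it still escapes under a coprime maximal sublattice, giving (ii). Assembling the local pieces produces $S$, whence $T:=S^{-1}B$; transporting the local statements back through the primary decomposition yields (i) and (ii) globally, and one checks in the boundary cases that a factorization with both factors non-units can still be arranged.

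The hard part will be exactly this local step at the primes dividing $\det A$: a single cut $SR_p^2$ must be large enough toward $A$ that every lattice above it is non-coprime (condition (i)) and simultaneously small enough that every lattice strictly between $BR_p^2$ and it escapes under some coprime maximal sublattice (condition (ii)). These two demands pull in opposite directions, and their joint satisfiability rests on the $2\times 2$ relative-position analysis; this is precisely where second order is used, since for larger matrices the sublattice lattice is far richer and a single cut separating the ``$A$-divisible'' part of $B$ from its relatively prime part need not exist.
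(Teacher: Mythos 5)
Your lattice dictionary is correct: left divisibility is inclusion of column lattices, $X\in\GL_2(R)$ iff $XR^2=R^2$, and $XK+YK=K$ iff $XR^2+YR^2=R^2$; your recastings of (i) (the image of $AR^2$ in $R^2/SR^2$ lies in the radical) and of (ii) (every $L$ with $BR^2\subsetneq L\subsetneq SR^2$ lies under a coprime maximal sublattice) are also correct, and (i) does indeed localize prime by prime. The gap is the gluing step: condition (ii) does \emph{not} localize, and the sentence ``transporting the local statements back through the primary decomposition yields (i) and (ii) globally'' is false for the $S$ you build. An intermediate lattice $L$ can mix local behaviours: take $L$ equal to $SR^2$ at every prime except one prime $p_0\mid\alpha_1$, where you put $L_{p_0}=BR^2_{p_0}$. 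If your $S$ is an atom above $B$ at $p_0$ while $SR^2_{q}\supsetneq BR^2_{q}$ at some other prime $q$ (your prescription forces this whenever $B$ has content at a prime not dividing $\det A$, since there you set $SR^2_q=R^2_q$), then $BR^2\subsetneq L\subsetneq SR^2$, yet \emph{no} coprime maximal sublattice contains $L$: at $p_0$ there are none at all, because every maximal sublattice of index $p_0$ contains $p_0R^2_{p_0}\supseteq AR^2_{p_0}$; and at any other prime $q$, a maximal sublattice of index $q$ containing $L_q=SR^2_q$ contains $SR^2$ globally, hence is non-coprime by the condition (i) you have already arranged. So (ii) fails; the two defining conditions interact \emph{across} primes, not only within each prime.

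A concrete instance: $R=\mathbb{Z}$, $A=\diag(2,2)$, $B=\diag(2,6)$. Your recipe permits $S=\diag(1,2)$ (atom above $B$ at $2$, trivial at $3$), and then $L=2\mathbb{Z}^2$ satisfies $BR^2\subsetneq L\subsetneq SR^2$ but lies under no coprime maximal sublattice, so $T'=\diag(2,1)$ witnesses the failure of (ii). The only valid choice is $S=\diag(2,2)$, $T=\diag(1,3)$: at primes dividing $\alpha_1$ the adequate part must absorb \emph{all} of $B$'s content, not one step of it, and the atom option is admissible only in the degenerate case where $S$ is trivial at every other prime. This cross-prime rigidity is precisely what the paper's construction is engineered for --- in \eqref{E:7}--\eqref{E:8} the factors $\sigma_1,\sigma_2$ carry the full $p$-parts of $\beta_1,\beta_2$ for every $p\in\Sigma(\alpha_1)$ --- and it is why the paper works with the global data of Theorem~\ref{T:2} and Lemma~\ref{LLM:3} (the shape of $P_SP_A^{-1}$ and the divisibility \eqref{E:6}) rather than a purely local argument. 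Your ``hard part'' is therefore misdiagnosed: the analysis over each $R_p$ is easy, as you say; what actually needs proof is the interaction between primes, which your sketch assumes away, and your hedge ``or $BR_p^2$ itself'' does not rescue it, since nothing in a local analysis can select between the two choices while only one of them survives globally.
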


Let $R$ be a  commutative PID with $1\not=0$.
A subset of  $R^{*} \setminus \{1\} $ consisting of all indecomposable
divisors of an  element $a\in R$ is   called  the {\it  spectrum} of  $a$ and is denoted by $\Sigma (a)$.
The spectrum  of a nonsingular matrix $A\in R^{2\times 2}$   is the set $\Sigma (A):=\Sigma (\alpha_{2})$ (see \eqref{E:1}). Matrices $M, N\in {R}^{2\times 2}$ are  called   {\it strongly right associated} if there is a matrix $U\in {\GL}_2(R)$ such that $M=NU$.

Let $A,B,C,D,A_1,B_1 \in {R}^{2\times 2}$.
If $A=BC$,  then $A$ is called a {\it right multiple} of $B$.  If $A=DA_{1} $ and $B=DB_{1} $, then $D$ is called a {\it left common  divisor} of   $A$ and $B$. In addition, if   $D$ is a right multiple of each left common   divisor of $A$ and $B$, then   $D$ is called a {\it  left  greatest   common  divisor}  of   $A$ and $B$, which  we denoted  by  $D:=(A,B)_{l}$. The left  greatest   common  divisor $(A,B)_{l}$ is unique up to right strongly  associates \cite[Theorem 1.12, p.\,39]{Mon}.

Let $A \in {R}^{2\times 2}$. In view of equation  \eqref{E:1}, we  use  the following presentation:
\begin{equation}\label{E:2}
A:=P_{A}^{-1}\cdot \diag (\alpha_{1},  \alpha_{2} )\cdot  Q_{A}^{-1},
\end{equation}
in which   $\diag (\alpha_{1},  \alpha_{2})=\mathrm{SNF}(A)$, and
$P_{A}, P_{B}$ are the left and right  transforming matrices of $A$.

Our next main result is the following:
\begin{theorem}\label{T:2}
Let  $R$  be a  commutative PID such that $1\not=0$ and let
\[
A:=P_{A}^{-1} \cdot \diag (\alpha_{1},  \alpha_{2} ) \cdot Q_{A}^{-1}\quad   \text{and} \quad  S:=P_{S}^{-1} \cdot \diag (\sigma_{1},  \sigma_{2} ) \cdot Q_{S}^{-1}
\]
be nonsingular matrices  of the form \eqref{E:2}.
Each left divisor of the  matrix   $S$  has a nontrivial left common divisor with the matrix  $A$ if and only if
$\Sigma (\sigma_{i} ) \subseteq \Sigma (\alpha_{i} )$ for  $i=1,2$ and  one of the following conditions holds:
\begin{itemize}
\item[(i)]  $\Sigma (\sigma_{2} )\subseteq \Sigma (\alpha_{1} )$;

\item[(ii)]
\[
P_{S} = \left[\begin{array}{cc}  {m_{11} } & {m_{12} } \\ {q_{1} \cdots q_{k} m_{21} } & {m_{22} } \end{array}\right] P_{A}, \qquad  \qquad (m_{ij}\in R)
\]
where $\{q_{1},\ldots,  q_{k} \}= \Sigma (\sigma_{2} ) \backslash \Sigma (\alpha_{1})$.
\end{itemize}
\end{theorem}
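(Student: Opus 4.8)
The plan is to localize the matrix divisibility statement at each irreducible $p\in R$, turning it into a family of linear-algebra conditions over the residue fields $k_{p}:=R/(p)$, and then to read off both the spectral containments and the structural restriction on $P_{S}$ from these. First I would set up the dictionary between left divisors and column modules. For nonsingular $B\in R^{2\times2}$ write $\gp{B}:=BR^{2\times1}$ for the $R$-module generated by its columns. If $S=BC$ then $\gp{S}\subseteq\gp{B}$, and conversely any submodule $M$ with $\gp{S}\subseteq M\subseteq R^{2\times1}$ is free of rank $2$ and is the column module of a left divisor of $S$ (pick a basis of $M$ as the columns of $B$ and set $C=B^{-1}S$). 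Moreover $\gp{(B,A)_{l}}=\gp{B}+\gp{A}$, so $(B,A)_{l}\in\GL_{2}(R)$ exactly when $\gp{B}+\gp{A}=R^{2\times1}$, while $B\notin\GL_{2}(R)$ exactly when $\gp{B}\ne R^{2\times1}$. Hence the asserted property is equivalent to the implication
\[
\gp{S}\subseteq M\subsetneq R^{2\times1}\ \Longrightarrow\ M+\gp{A}\ne R^{2\times1}.
\]
Since every proper submodule containing $\gp{S}$ lies in a maximal such one and passing to a larger $M$ enlarges $M+\gp{A}$, it suffices to test the maximal proper $M\supseteq\gp{S}$. Each such $M$ has simple quotient $R^{2\times1}/M\cong R/(p)$, so $M$ is cut out by a nonzero row vector $w$ over $k_{p}$ with $w\bar S=0$, and $M+\gp{A}\ne R^{2\times1}$ means the same $w$ satisfies $w\bar A=0$. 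Writing $N_{p}(\,\cdot\,)$ for the left kernel over $k_{p}$, the property becomes: $N_{p}(S)\subseteq N_{p}(A)$ for every irreducible $p$.

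Next I would compute these kernels from the Smith data. Reducing $S=P_{S}^{-1}\diag(\sigma_{1},\sigma_{2})Q_{S}^{-1}$ modulo $p$ keeps $\bar P_{S},\bar Q_{S}$ invertible, so $\dim_{k_{p}}N_{p}(S)$ equals the corank of $\diag(\bar\sigma_{1},\bar\sigma_{2})$: it is $0$ if $p\nmid\sigma_{2}$, it is $1$ if $p\mid\sigma_{2}$ but $p\nmid\sigma_{1}$, and it is $2$ if $p\mid\sigma_{1}$ (using $\sigma_{1}\mid\sigma_{2}$); likewise for $A$ with the $\alpha_{i}$. Feeding $N_{p}(S)\subseteq N_{p}(A)$ through these three cases forces $N_{p}(A)=k_{p}^{1\times2}$ whenever $p\mid\sigma_{1}$ and $N_{p}(A)\ne0$ whenever $p\mid\sigma_{2}$, which is exactly the pair of containments $\Sigma(\sigma_{i})\subseteq\Sigma(\alpha_{i})$ for $i=1,2$.

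The only surviving constraints come from primes where both kernels are exactly one-dimensional, i.e. $p\mid\sigma_{2}$, $p\nmid\sigma_{1}$, $p\mid\alpha_{2}$, $p\nmid\alpha_{1}$; granting the containments just derived, this set is precisely $\Sigma(\sigma_{2})\setminus\Sigma(\alpha_{1})=\{q_{1},\dots,q_{k}\}$. A direct reduction then shows $N_{p}(S)=k_{p}\cdot r_{2}(\bar P_{S})$ and $N_{p}(A)=k_{p}\cdot r_{2}(\bar P_{A})$, where $r_{2}$ is the second row, so $N_{p}(S)\subseteq N_{p}(A)$ amounts to $r_{2}(\bar P_{S})$ and $r_{2}(\bar P_{A})$ being proportional over $k_{p}$. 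Setting $M:=P_{S}P_{A}^{-1}$, proportionality modulo $q_{j}$ is equivalent to the vanishing of the $(2,1)$-entry of $\bar M$ over $k_{q_{j}}$, i.e. $q_{j}\mid M_{21}$; as the $q_{j}$ are pairwise non-associate irreducibles, the Chinese remainder theorem makes this hold simultaneously for all $j$ exactly when $(q_{1}\cdots q_{k})\mid M_{21}$. Writing $M_{21}=(q_{1}\cdots q_{k})m_{21}$ gives $P_{S}=MP_{A}$ with $M$ of the displayed shape, which is (ii), whereas if $\Sigma(\sigma_{2})\subseteq\Sigma(\alpha_{1})$ the index set is empty, no directional constraint survives, and $P_{S}$ is unrestricted, which is (i). Reading all these equivalences in both directions yields the two implications of the theorem.

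I expect the main obstacle to be the first paragraph: justifying cleanly that it suffices to test maximal proper submodules and that these are controlled by the residue-field kernels, together with the bookkeeping that left-gcd triviality is detected by $\gp{B}+\gp{A}=R^{2\times1}$ and that one may pass freely between a left divisor and its column module up to right strong associates. Once the problem is pushed down to the fields $k_{p}$, the dimension count and the identification of $N_{p}(S)$ with the second row of $P_{S}$ are routine linear algebra, and the passage from pointwise proportionality to the global form of $P_{S}$ is a single application of the Chinese remainder theorem.
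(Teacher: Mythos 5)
Your proof is correct, and it takes a genuinely different route from the paper's. The paper works inside the Smith-form calculus of its references: both directions factor $S$ (or its divisors) into explicit pieces with prescribed Smith normal forms, invoke Lemma \ref{LLM:1}, the coprimality criterion of Fact \ref{F:1}(ii), and the divisor parametrizations of Facts \ref{F:2} and \ref{F:3}, and the argument must then check separately that the condition obtained on $P_{S}P_{A}^{-1}$ does not depend on the choice of transforming matrices. You instead translate the statement into module language: left divisors of $S$ up to right strong associates are the intermediate submodules $\gp{S}\subseteq M\subseteq R^{2\times 1}$, having a nontrivial left common divisor with $A$ means $M+\gp{A}\neq R^{2\times 1}$, and since it suffices to test maximal proper $M$, the whole property collapses to the containment of left kernels $N_{p}(S)\subseteq N_{p}(A)$ over every residue field $R/(p)$; from there the spectral inclusions and the shape of $P_{S}$ follow by linear algebra over fields and unique factorization. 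Your route buys self-containedness (none of Facts \ref{F:1}--\ref{F:3} nor Lemma \ref{LLM:1} is needed) and it makes the paper's independence-of-choice verification automatic: the kernels are intrinsic to $S$ and $A$, so $(q_{1}\cdots q_{k})\mid(P_{S}P_{A}^{-1})_{21}$ holds for one admissible pair $(P_{S},P_{A})$ if and only if it holds for every one. What the paper's route buys is reusability: the explicit parametrization of divisors with a given Smith normal form and the $\tau_{21}$-criterion are precisely the tools recycled in Lemma \ref{LLM:3}, in the proof of Theorem \ref{T:1}, and in the examples, whereas your dictionary would have to be translated back into that calculus to support those later constructive arguments. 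For a complete write-up you should supply the short standard proofs of the facts you yourself flag: that a nontrivial common left divisor of $B$ and $A$ exists iff $\gp{B}+\gp{A}\subsetneq R^{2\times 1}$ (equivalently, $\gp{(B,A)_{l}}=\gp{B}+\gp{A}$), and that every proper submodule of $R^{2\times 1}$ lies in a maximal one (finite generation of the quotient); note also that ``each left divisor'' must be read as ``each non-invertible left divisor'' --- exactly what your restriction $M\subsetneq R^{2\times 1}$ encodes, and how the paper itself reads it in its `only if' part.
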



\section{Preliminaries, lemmas and proofs}

For each $2 \times 2 $ nonsingular matrices $A,B$ of the form \eqref{E:2}  we define the matrix
$[\tau_{ij}]:= P_{B} P_{A}^{-1}$ and the set
\begin{equation}\label{E:3}
\boldsymbol{\mathrm{L}}_{\alpha_1, \beta_2}:=
\left\{
\left[\begin{array}{cc}  {l_{11} } & {l_{12} } \\ {\frac{\beta_{2} }{(\beta_{2},  \alpha_{1} )} l_{12} } & {l_{22} } \end{array}\right]\in \GL_2(R)\mid l_{ij}\in R
\right\}.
\end{equation}

In the sequel we will  use the following facts:

\begin{fact} \label{F:1}  \cite[Theorem 1, p.\,851]{UMZh2015}
Let $R$ be a  commutative   elementary divisor ring  and let  $A,B\in {R}^{2\times 2}$ of the form \eqref{E:2}. Then
\begin{itemize}
\item[(i)]  ${\rm SNF}((A,B)_{l}) =
\diag\big((\alpha_{1}, \beta_{1} ), (\alpha_{2}, \beta_{2}, [\alpha_{1}, \beta_{1} ]\tau_{21} )\big);
$
\item[(ii)] $A,B$ are  left relatively prime (i.e., $(A,B)_{l}=I$)  if and only if
\[
(\alpha_{2}, \beta_{2}, [\alpha_{1}, \beta_{1} ]\tau_{21} )=1.
\]
\end{itemize}
\end{fact}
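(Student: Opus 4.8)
The plan is to translate the matrix condition into the arithmetic criterion of Fact~\ref{F:1} and then analyse it prime by prime. First I would normalise the data. Since $(A,B)_{l}$ is unaffected by right multiplication of either argument by an invertible matrix, and since the set of left divisors of $S$ is unchanged when $S$ is replaced by $SQ$ with $Q\in\GL_2(R)$, I may assume $Q_{A}=Q_{S}=I$. Left multiplying the pair $(A,S)$ simultaneously by $P_{A}$ carries a left divisor $B$ of $S$ to the left divisor $P_{A}B$ of $P_{A}S$ and multiplies $(A,B)_{l}$ by $P_{A}$, hence preserves left coprimality; this lets me further assume $A=\diag(\alpha_{1},\alpha_{2})$ and $S=P^{-1}\diag(\sigma_{1},\sigma_{2})$, where $P:=P_{S}P_{A}^{-1}$. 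With this normalisation the structural hypothesis in (ii) becomes precisely $q_{1}\cdots q_{k}\mid p_{21}$, where $P=[p_{ij}]$.

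Next I would describe the left divisors. From $S=P^{-1}\diag(\sigma_{1},\sigma_{2})$ one sees that $B$ is a left divisor of $S$ if and only if $B=P^{-1}N$ for a left divisor $N$ of $\diag(\sigma_{1},\sigma_{2})$; writing $N$ in lower triangular Hermite form $N=\bigl[\begin{smallmatrix} d_{1} & 0\\ t& d_{2}\end{smallmatrix}\bigr]$, the conditions $d_{1}\mid\sigma_{1}$, $d_{2}\mid\sigma_{2}$, $d_{1}d_{2}\mid t\sigma_{1}$ characterise these divisors. Since $P$ is invertible, $\mathrm{SNF}(B)=\mathrm{SNF}(N)=\diag(\beta_{1},\beta_{2})$ and $P_{B}=P_{N}P$, so the entry $\tau_{21}$ of Fact~\ref{F:1} equals $(P_{N}P)_{21}$. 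Fact~\ref{F:1}(ii) then says that $B$ is left coprime to $A$ exactly when $(\alpha_{2},\beta_{2},[\alpha_{1},\beta_{1}](P_{N}P)_{21})=1$, and the theorem is equivalent to the statement that no non-unit left divisor of $S$ satisfies this equality (the identity being excluded as a trivial divisor).

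For the sufficiency direction I would take any non-unit left divisor $B=P^{-1}N$ and exhibit a prime $p$ dividing the greatest common divisor $(\alpha_{2},\beta_{2},[\alpha_{1},\beta_{1}]\tau_{21})$. Because $\beta_{2}$ is a non-unit I may pick a prime $p\mid\beta_{2}$; the inclusion $\Sigma(\beta_{2})\subseteq\Sigma(\sigma_{2})\subseteq\Sigma(\alpha_{2})$ forces $p\mid\alpha_{2}$. If $p\mid\alpha_{1}$ or $p\mid\beta_{1}$ then $p\mid[\alpha_{1},\beta_{1}]$ and we are done; the only remaining case is $p\in\Sigma(\sigma_{2})\setminus\Sigma(\alpha_{1})=\{q_{1},\dots,q_{k}\}$ with $p\nmid\beta_{1}$. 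Here $\Sigma(\sigma_{1})\subseteq\Sigma(\alpha_{1})$ gives $p\nmid\sigma_{1}$, and then the Hermite conditions force $p\nmid d_{1}$, $p\mid d_{2}$ and $p\mid t$; localising at $p$ shows $p\mid(P_{N})_{21}$. Combining this with the structural hypothesis $p\mid p_{21}$ yields $p\mid(P_{N}P)_{21}=\tau_{21}$, so $p$ divides the whole gcd and $B$ is not coprime to $A$. In case (i) the inclusion $\Sigma(\sigma_{2})\subseteq\Sigma(\alpha_{1})$ makes this last case vacuous, which is why no restriction on $P$ is needed.

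For the necessity direction I would argue contrapositively, producing for each failing hypothesis an explicit coprime non-unit left divisor. If $\Sigma(\sigma_{2})\not\subseteq\Sigma(\alpha_{2})$, a prime $p\mid\sigma_{2}$, $p\nmid\alpha_{2}$, gives $B=P^{-1}\diag(1,p)$ with $(\alpha_{2},p)=1$. If $\Sigma(\sigma_{1})\not\subseteq\Sigma(\alpha_{1})$, a prime $p\mid\sigma_{1}$, $p\nmid\alpha_{1}$, lets me take $N=\bigl[\begin{smallmatrix}1&0\\ t& p\end{smallmatrix}\bigr]$ (a left divisor for every $t$, since $p\mid\sigma_{1}$) and choose $t$ so that $\tau_{21}=p_{21}-tp_{11}$ is a $p$-unit, which is possible because the first column $(p_{11},p_{21})$ of $P$ is primitive modulo $p$. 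Finally, if both inclusions hold but in case (ii) the structure fails, a prime $q_{j}\nmid p_{21}$ lets me take $B=P^{-1}\diag(1,q_{j})$, for which $\tau_{21}=p_{21}$ is a $q_{j}$-unit; in each case the criterion evaluates to $1$. The main obstacle is the local computation in the sufficiency step: identifying exactly the primes for which the value of $\tau_{21}$ is relevant and proving that for them the $(2,1)$ entry of $P_{B}$ is forced to be divisible by the radical $q_{1}\cdots q_{k}$. This is where the structure of $P_{S}$ enters, and where the ambiguity of $P_{B}$ must be controlled modulo the stabiliser of $\diag(\beta_{1},\beta_{2})$, captured by the set $\boldsymbol{\mathrm{L}}_{\alpha_1, \beta_2}$ of \eqref{E:3}.
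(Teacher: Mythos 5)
There is a genuine gap, and it is structural rather than local: what you have written is a proof sketch of Theorem~\ref{T:2}, not of Fact~\ref{F:1}. The statement at issue is Fact~\ref{F:1} itself---the computation $\mathrm{SNF}\big((A,B)_{l}\big)=\diag\big((\alpha_{1},\beta_{1}),(\alpha_{2},\beta_{2},[\alpha_{1},\beta_{1}]\tau_{21})\big)$ and the coprimality criterion it yields. Your argument never derives this formula; on the contrary, it invokes it repeatedly as a known tool (``the entry $\tau_{21}$ of Fact~\ref{F:1} equals $(P_{N}P)_{21}$'', ``Fact~\ref{F:1}(ii) then says that $B$ is left coprime to $A$ exactly when $(\alpha_{2},\beta_{2},[\alpha_{1},\beta_{1}](P_{N}P)_{21})=1$'') and uses it to characterize which left divisors of $S$ share a nontrivial left common divisor with $A$, including the structural condition $q_{1}\cdots q_{k}\mid p_{21}$ on $P_{S}P_{A}^{-1}$. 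That is precisely the content of Theorem~\ref{T:2}(i)--(ii). As a proof of Fact~\ref{F:1} the proposal is therefore circular: every step that touches the gcd $(\alpha_{2},\beta_{2},[\alpha_{1},\beta_{1}]\tau_{21})$ presupposes the very result to be established, and no independent argument for it is given anywhere in the text.

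What is missing is any computation of the greatest common left divisor. A proof of Fact~\ref{F:1} must start from the definition of $(A,B)_{l}$ (the common left divisor $D$ with $A=DA_{1}$, $B=DB_{1}$ that is a right multiple of every common left divisor, obtainable for instance by column-reducing the block matrix $\left[\begin{array}{cc} A & B \end{array}\right]$ to $\left[\begin{array}{cc} D & 0 \end{array}\right]$ by some $U\in\GL_{4}(R)$), normalize $A=P_{A}^{-1}\diag(\alpha_{1},\alpha_{2})$ and $B=P_{B}^{-1}\diag(\beta_{1},\beta_{2})$ after absorbing the right factors $Q_{A}^{-1},Q_{B}^{-1}$, and then carry out the reduction of the resulting pair, which is exactly where the least common multiple $[\alpha_{1},\beta_{1}]$ and the single entry $\tau_{21}$ of $[\tau_{ij}]=P_{B}P_{A}^{-1}$ emerge; part (ii) then follows from part (i) since $(A,B)_{l}=I$ means its Smith normal form is trivial (note $(\alpha_{1},\beta_{1})$ divides the second diagonal entry). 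None of this appears in your proposal. For the record, the paper itself gives no proof either: Fact~\ref{F:1} is imported verbatim from \cite{UMZh2015}; but a blind proof attempt for this statement has to supply that computation, and yours instead supplies a (plausible-looking, but Fact-\ref{F:1}-dependent) argument for the later Theorem~\ref{T:2}.
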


\begin{fact} \label{F:2} \cite[Theorem 4.3, p.\,127]{Mon} Let $R$ be a  commutative  elementary divisor ring  and let  $A,B\in {R}^{2\times 2}$ of the form \eqref{E:2}.
The matrix $B$ is a left divisor of $A$ (i.e.,  $A=BC$) if and only if $\beta_{i} |\alpha_{i} $ for  $i=1,2$ and $P_{B} =LP_{A} $, in which $L\in \boldsymbol{\mathrm{L}}_{\alpha_1, \beta_2}$ (see \eqref{E:3}).
\end{fact}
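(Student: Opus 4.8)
The plan is to turn left divisibility into an integrality condition on a single explicit matrix and then read off the two requirements from the arithmetic of its entries. Since $B$ is nonsingular, $A=BC$ has a solution $C\in R^{2\times2}$ if and only if $B^{-1}A\in R^{2\times2}$. Using the decompositions \eqref{E:2} I would write $B^{-1}=Q_B\,\diag(\beta_1,\beta_2)^{-1}P_B$ and compute
\[
B^{-1}A=Q_B\,\diag(\beta_1,\beta_2)^{-1}\,(P_BP_A^{-1})\,\diag(\alpha_1,\alpha_2)\,Q_A^{-1}.
\]
Because $Q_B,Q_A^{-1}\in\GL_2(R)$ and membership in $R^{2\times2}$ is preserved under multiplication by invertible integral matrices on either side, this lies in $R^{2\times2}$ if and only if $\diag(\beta_1,\beta_2)^{-1}\,[\tau_{ij}]\,\diag(\alpha_1,\alpha_2)\in R^{2\times2}$, where $[\tau_{ij}]:=P_BP_A^{-1}$. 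Writing out the four entries, the latter is equivalent to the system
\begin{equation*}
\beta_1\mid\tau_{11}\alpha_1,\quad \beta_1\mid\tau_{12}\alpha_2,\quad \beta_2\mid\tau_{21}\alpha_1,\quad \beta_2\mid\tau_{22}\alpha_2. \tag{$\ast$}
\end{equation*}

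The core of the argument is to show that $(\ast)$ collapses to ``$\beta_i\mid\alpha_i$ for $i=1,2$ together with $\tfrac{\beta_2}{(\beta_2,\alpha_1)}\mid\tau_{21}$'', and the engine for this is the unimodularity of $[\tau_{ij}]=P_BP_A^{-1}\in\GL_2(R)$: since $\det[\tau_{ij}]\in U(R)$, no prime divides both entries of a common row or column, so $(\tau_{11},\tau_{21})=(\tau_{21},\tau_{22})=1$. Assuming $(\ast)$, from $\beta_1\mid\tau_{11}\alpha_1$ and $\beta_1\mid\beta_2\mid\tau_{21}\alpha_1$ I obtain $\beta_1\mid(\tau_{11},\tau_{21})\alpha_1=\alpha_1$; from $\beta_2\mid\tau_{21}\alpha_1\mid\tau_{21}\alpha_2$ and $\beta_2\mid\tau_{22}\alpha_2$ I obtain $\beta_2\mid(\tau_{21},\tau_{22})\alpha_2=\alpha_2$; and the third divisibility $\beta_2\mid\tau_{21}\alpha_1$ is, by the elementary rule $a\mid bc\Leftrightarrow \tfrac{a}{(a,c)}\mid b$, exactly $\tfrac{\beta_2}{(\beta_2,\alpha_1)}\mid\tau_{21}$. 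Here I use $\alpha_1\mid\alpha_2$ from \eqref{E:2}.

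For the converse I would recover $(\ast)$ from these three conditions: $\beta_1\mid\alpha_1$ gives $\beta_1\mid\alpha_1\mid\tau_{11}\alpha_1$ and $\beta_1\mid\alpha_1\mid\alpha_2\mid\tau_{12}\alpha_2$; $\beta_2\mid\alpha_2$ gives $\beta_2\mid\alpha_2\mid\tau_{22}\alpha_2$; and $\tfrac{\beta_2}{(\beta_2,\alpha_1)}\mid\tau_{21}$ is again $\beta_2\mid\tau_{21}\alpha_1$. Thus $(\ast)$ holds and $B$ is a left divisor of $A$. It remains only to translate the surviving corner condition into the language of the statement: $\tfrac{\beta_2}{(\beta_2,\alpha_1)}\mid\tau_{21}$ says precisely that $P_BP_A^{-1}$ has its $(2,1)$-entry divisible by $\tfrac{\beta_2}{(\beta_2,\alpha_1)}$, i.e. $P_BP_A^{-1}\in\boldsymbol{\mathrm{L}}_{\alpha_1,\beta_2}$ of \eqref{E:3}, which is the assertion $P_B=LP_A$ with $L=P_BP_A^{-1}\in\boldsymbol{\mathrm{L}}_{\alpha_1,\beta_2}$.

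I expect the main obstacle to be conceptual rather than computational: recognizing that of the four divisibilities in $(\ast)$ only the $(2,1)$-corner one is genuinely restrictive, while the remaining three are forced by $\beta_i\mid\alpha_i$ and the chain $\alpha_1\mid\alpha_2$, and that it is exactly the invertibility of $[\tau_{ij}]$, through the coprimality of its rows and columns, that both produces $\beta_i\mid\alpha_i$ and makes the equivalence an ``if and only if'' rather than a single implication. A secondary point to handle with care is the invariance of the corner condition under the non-uniqueness of the transforming matrix $P_B$ (defined only up to the stabiliser of $\diag(\beta_1,\beta_2)$), which is automatic here because $B\mid A$ is intrinsic; alternatively the whole reduction follows at one stroke from Fact \ref{F:1}(i), since $B\mid A$ is equivalent to $\mathrm{SNF}(B)=\mathrm{SNF}((A,B)_l)$ and, once $\beta_1\mid\alpha_1$ forces $[\alpha_1,\beta_1]=\alpha_1$, the second invariant $(\alpha_2,\beta_2,\alpha_1\tau_{21})$ equals $\beta_2$ precisely when $\tfrac{\beta_2}{(\beta_2,\alpha_1)}\mid\tau_{21}$.
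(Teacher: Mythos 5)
The paper contains no proof of this statement: Fact~\ref{F:2} is quoted from \cite[Theorem 4.2, p.\,127]{Mon}, so there is no internal argument to compare against. Your blind proof is correct and self-contained, and it is worth recording why each step holds. The reduction of $A=BC$ to the integrality of $\diag(\beta_1,\beta_2)^{-1}[\tau_{ij}]\diag(\alpha_1,\alpha_2)$ is legitimate because $B$ is nonsingular (so $C=B^{-1}A$ is forced over the quotient field) and $Q_B, Q_A^{-1}\in\GL_2(R)$; the collapse of the four divisibilities $(\ast)$ to ``$\beta_i\mid\alpha_i$ plus the corner condition'' uses exactly the right ingredients, namely $\det[\tau_{ij}]\in U(R)$ (giving $(\tau_{11},\tau_{21})=(\tau_{21},\tau_{22})=1$), the chains $\beta_1\mid\beta_2$ and $\alpha_1\mid\alpha_2$ coming from the Smith forms, the GCD identity $(xa,ya)=(x,y)a$, and the rule $a\mid bc\Leftrightarrow \frac{a}{(a,c)}\mid b$, all valid in a PID; and the converse direction is a routine check. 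One caution about notation: in \eqref{E:3} as printed the $(2,1)$ entry is $\frac{\beta_2}{(\beta_2,\alpha_1)}l_{12}$, tying it to the $(1,2)$ entry; this is evidently a misprint for an independent parameter $l_{21}$ (compare the matrix $L$ in the proof of Lemma~\ref{LLM:3}, where $l_{21}$ appears, and the matrices $P_M$, $D$ in the proof of Theorem~\ref{T:2}), and your reading --- invertible matrices whose $(2,1)$ entry is divisible by $\frac{\beta_2}{(\beta_2,\alpha_1)}$ --- is the intended one and the one the paper actually uses downstream. Your closing alternative via Fact~\ref{F:1}(i) is also sound and arguably closer in spirit to the cited source: $B$ left-divides $A$ iff $B$ is a right strong associate of $(A,B)_l$ (since $(A,B)_l$ left-divides $B$ and a determinant comparison makes the cofactor invertible), and matching $\diag(\beta_1,\beta_2)$ against $\diag\big((\alpha_1,\beta_1),(\alpha_2,\beta_2,[\alpha_1,\beta_1]\tau_{21})\big)$ yields precisely $\beta_1\mid\alpha_1$, $\beta_2\mid\alpha_2$ and $\frac{\beta_2}{(\beta_2,\alpha_1)}\mid\tau_{21}$; either route is acceptable, and your main argument has the advantage of being entirely elementary, resting only on unimodularity and GCD arithmetic rather than on the structure theory of $(A,B)_l$.
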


\begin{fact} \label{F:3}    \cite[Theorem 4.4, p.\,128]{Mon}  Let $R$ be a  commutative  elementary divisor ring   and let  $A\in {R}^{2\times 2}$ of the form \eqref{E:2}. Let $\beta_1,\beta_2\in R$ such that $\beta_1 |\beta_2$ and $\beta_{i} |\alpha_{i} $ for  $i=1,2$.
The set of all left divisors  of $A$ with the Smith form  $\diag (\beta_{1},  \beta_{2} )$ has the form
\[
\left(\boldsymbol{\mathrm{L}}_{\alpha_1,  \beta_2}P_{A} \right)^{-1} \cdot \diag (\beta_{1},  \beta_{2} )\cdot \GL_{2} (R).
\]

\end{fact}

\begin{lemma}\label{LLM:1}
Let  $R$  be a  commutative B\'ezout domain and let  $A, B \in R^{n \times n}$ (with $n\geq 2$) be  nonsingular matrices.  If $\det(B)$  is indecomposable in $R$ and $(A, B)_l\neq I$ then $A=BC$.
\end{lemma}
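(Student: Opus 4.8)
The plan is to exploit the multiplicativity of the determinant together with the indecomposability hypothesis to force the relevant cofactor matrix to be invertible. First I would set $D := (A,B)_{l}$ and use the defining property that $D$ is a left common divisor of $A$ and $B$: this yields factorizations $A = DA_{1}$ and $B = DB_{1}$ with $A_{1}, B_{1} \in R^{n\times n}$. Taking determinants in the second relation gives the key scalar identity $\det(B) = \det(D)\det(B_{1})$ in $R$.

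Next I would read off the arithmetic consequences. The hypothesis $(A,B)_{l} \neq I$ says that $D$ is not a right associate of the identity, hence $D \notin \GL_{n}(R)$ and $\det(D) \notin U(R)$. Since $\det(B)$ is indecomposable and $\det(B) = \det(D)\det(B_{1})$ is a factorization into two elements of $R$, one of the two factors must lie in $U(R)$; as $\det(D)$ is a non-unit, this forces $\det(B_{1}) \in U(R)$.

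Finally I would invert. Over the commutative domain $R$, a square matrix with unit determinant is invertible (via the adjugate/Cramer formula), so $B_{1} \in \GL_{n}(R)$. From $B = DB_{1}$ we then recover $D = B B_{1}^{-1}$, and substituting into $A = DA_{1}$ gives $A = B (B_{1}^{-1}A_{1})$. Setting $C := B_{1}^{-1}A_{1} \in R^{n\times n}$ yields $A = BC$, which is exactly the assertion that $B$ is a left divisor of $A$.

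I expect no serious obstacle in this argument; the only points requiring care are the translation of ``$(A,B)_{l} \neq I$'' into the statement ``$\det(D)$ is a non-unit'' (which rests on the uniqueness of the left greatest common divisor up to right strong associates) and the passage from a unit determinant to genuine invertibility of $B_{1}$ over $R$. Both are routine, so the heart of the lemma is simply the observation that indecomposability of $\det(B)$ leaves no room for a nontrivial common left divisor to be anything other than a full left divisor of $A$.
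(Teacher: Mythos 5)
Your proof is correct and follows essentially the same route as the paper's own argument: both take $D:=(A,B)_{l}$, use $\det(B)=\det(D)\det(B_{1})$ together with indecomposability to force $\det(B_{1})\in U(R)$, and then substitute $D=BB_{1}^{-1}$ into $A=DA_{1}$. Your added care in justifying that $(A,B)_{l}\neq I$ implies $\det(D)\notin U(R)$, and that a unit determinant gives invertibility via the adjugate, only makes explicit what the paper leaves implicit.
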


\begin{proof} Let  $D:=(A, B)_l \neq I$. Clearly,  $B=DB_1$ and  $\det(D) | \det(B)$.  Thus   $\det(D)$   and $\det(B)$ are strong associates in  $R$, i.e., $\det(B) = \det(D) e$ for some $e \in U(R)$.  Consequently, $\det(B_1)=e$, so  $B_1 \in \GL_n(R)$ and    $D=BB_1^{-1}$.   Since $A=DA_1$,  we have $A=BB_1^{-1}A_1=BC$,  where $C=B_1^{-1}A_1$.
\end{proof}

\begin{proof}[Proof of Theorem \ref{T:2}.  Necessity. ] Let   $\omega \in \Sigma (\sigma_{1} )$. Thus
$\sigma_{1} =\omega \sigma_{1}'$ and   $\sigma_{2} =\omega \sigma_{2}'$ for some $\sigma_{1}', \sigma_{2}'\in R$. If
$M:=P^{-1} \cdot  \diag({1},  {\omega}) \cdot  Q^{-1}$ and
\[
M_1:=\left(Q \cdot  \diag({\omega}, 1)\cdot  P\right)\left(P_{S}^{-1} \cdot  \diag({\sigma_{1}'},   {\sigma_{2} '}) \cdot  Q_{S}^{-1}\right),
\]
in which $P,Q$ are arbitrary  invertible matrices, then
\[
\begin{split}
S=P_{S}^{-1} \cdot  \diag({\sigma_{1}},   {\sigma_{2}})  \cdot  Q_{S}^{-1}=M \cdot M_1
\end{split}
\]
and  $(A, M)_{l} \ne I$.
Taking into account that $\det(M)$ is indecomposable in $R$, we obtain that
$A=MA_{1}$  by Lemma \ref{LLM:1}. Consequently, all matrices  $L$ with  $\mathrm{SNF}(L)=\diag (1, \omega )$ are left divisors of $A$. In accordance with     \cite[Theorem  5.3 p.\,152 and Property 4.11 p.\,147]{Mon} we have $\omega |\alpha_{1}$ and   $\Sigma (\sigma_{1} )\subseteq \Sigma (\alpha_{1} )$.
\smallskip

\noindent
\underline{Case 1.} Suppose that   $ \Sigma (\sigma_{2} )\subseteq \Sigma (\alpha_{1})$.
Reasoning  similarly as before, we obtain that every matrix with  Smith normal form $\diag(\sigma_{1}, \sigma_{2})$ has a nontrivial left common divisor with  $A$.
\smallskip

\noindent
\underline{Case 2.} Let  $\mu \in \Sigma (\sigma_{2} )\setminus \Sigma (\alpha_{2})$. Thus   $\sigma_{2} =\mu \cdot \mu_{1} $ and  $(\mu,  \alpha_{2} )=1$. If   $C:=P_{S}^{-1}\cdot   \diag(1,  {\mu})$ and $C_1:=\diag(\sigma_{1},  {\mu_{1}}) \cdot   Q_{S}^{-1}$, then  $S=C C_{1}$.
Since $(\det(C),  \det(A))=1$, we have   $(A, C)_{l} =I$, a contradiction. Consequently  $\Sigma (\sigma_{2} )\subseteq \Sigma (\alpha_{2} )$.

Let  $\Sigma (\sigma_{2} )\backslash \Sigma (\alpha_{1} )=\{ q_{1},  \ldots,   q_{k} \} $ for $k\geq 1$ and let $i\in\{1,\ldots, k\}$. Thus  $\sigma_{2} =q_{i} \delta_{i}$ for some $\delta_{i}\in R$. If
$D:=P_{S}^{-1}\cdot   \diag(1,  {q_{i}})$ and $D_1:=\diag(\sigma_{1},  {\delta_{i}}) \cdot   Q_{S}^{-1}$, then
\[
S=P_{S}^{-1} \cdot  \diag({\sigma_{1}}, {\sigma_{2}})\cdot   Q_{S}^{-1}=D\cdot  D_1.
\]
All  left divisors $L$  of $S$  (including  $D$) with  $\mathrm{SNF}(L)=\diag(1, q_{i})$ belong to the set
\begin{align*}
{\bf W}&=\left\{\left( \boldsymbol{\mathrm{L}}_{\sigma_1, q_i}  P_{S} \right)^{-1} \cdot  \diag({1},  {q_{i}}) \cdot  \GL_{2} (R)\right\} & \text{by }& \text{Fact  \ref{F:3}}\\
&=\left\{ \left(\left[\begin{array}{cc}  {l_{11} } & {l_{12} } \\ {q_{i} l_{12} } & {l_{22} } \end{array}\right]P_S\right )^{-1}\cdot   \diag({1},  {q_{i}}) \cdot  \GL_{2} (R)\mid l_{ij}\in R \right\} & \text{since  }&  (q_{i},  \sigma_{1} )=1.
\end{align*}
Let us fix  $M:=P_{M}^{-1}  \cdot  \diag({1},  {q_{i}})\cdot   Q_{M}^{-1}\in {\bf W}$,  in which  $P_{M}: = \left[\begin{array}{cc}  {h_{11} } & {h_{12} } \\ {q_{i} h_{21} } & {h_{22} } \end{array}\right]  P_{S}$ for some $h_{pl}\in R$ and   $Q_{M}\in \GL_2(R)$ is fixed. The matrix  $M$ is a left divisor of $S$, so   $(A, M)_{l} \ne I$. Hence,   $d_{i}:=(\alpha_{2},  q_{i}, \alpha_{1} \tau_{21}^{(i)} ) \ne 1$ (see Fact  \ref{F:1}(ii)), where
\begin{equation} \label{E:4}
[\tau_{mn}^{(i)}]  :=P_{M} P_{A}^{-1} = \left[\begin{array}{cc}  {h_{11} } & {h_{12} } \\ {q_{i} h_{21} } & {h_{22} } \end{array}\right]   (P_{S} P_{A}^{-1}).
\end{equation}
Since $d_i \mid q_i$ and both $d_i$ and $q_i$ are indecomposable elements of $R$, it follows that they are strongly associated.  Taking into account that $d_i, q_i \in R^{\ast}$, we obtain
\[d_{i}=
q_{i}  =(\alpha_{2},  q_{i},  \alpha_{1} \tau_{21}^{(i)} )=(\alpha_{2},  (q_{i},  \alpha_{1} \tau_{21}^{(i)} ))=(\alpha_{2},  q_{i},  \tau_{21}^{(i)} ),
\]
so  $q_{i} | \tau_{21}^{(i)}$, i.e.,  $\tau_{21}^{(i)} =q_{i} n_i$  for some $n_i\in R$. It is obvious (see  \eqref{E:4}) that
\begin{equation} \label{E:5}
P_{S} P_{A}^{-1}=\left[\begin{array}{cc}  {h_{11} } & {h_{12} } \\ {q_{i} h_{21} } & {h_{22} } \end{array}\right]  ^{-1} \left[\begin{array}{cc}  {\tau_{11}^{(i)} } & {\tau_{12}^{(i)} } \\ {q_{i} n_{i} } & {\tau_{22}^{(i)} } \end{array}\right]=\left[\begin{array}{cc}  {p_{11} } & {p_{12} } \\ {q_{i} p_{21} } & {p_{22} } \end{array}\right]
\qquad (p_{mn}\in R).
\end{equation}
Let us show that \eqref{E:5} holds independently of the choices of $P_{S} $ and  $P_{A}$.
Let $P'_{S}$ and $P'_{A}$ be arbitrary left transforming matrices of $S$ and $A$, respectively. By \cite[Property 2.2 p.\,61]{Mon}),
\[
P'_{S} =FP_{S} \quad\text{and}\quad  P'_{A} =TP_{A},
\]
where
\[F:=  \left[\begin{array}{cc}  {f_{11} } & {f_{12} } \\ {\frac{\sigma_{2} }{\sigma_{1} } f_{21} } & {f_{22} } \end{array}\right], \;\; T^{-1}:=  \left[\begin{array}{cc}  {t_{11} } & {t_{12} } \\ {\frac{\alpha_{2} }{\alpha_{1} } t_{21} } & {t_{22}} \end{array}\right]\in \GL_2(R) \qquad (f_{mn}, t_{mn}\in R).
\]
Thus
\[
\begin{split}
P'_{S} (P'_{A} )^{-1} &=F(P_{S} P_{A}^{-1}) T^{-1} \\
&=  \left[\begin{array}{cc}  {f_{11} } & {f_{12} } \\ {\frac{\sigma_{2} }{\sigma_{1} } f_{21} } & {f_{22} } \end{array}\right]  \cdot  \left[\begin{array}{cc}  {p_{11} } & {p_{12} } \\ {q_{i} p_{21} } & {p_{22} } \end{array}\right]  \cdot  \left[\begin{array}{cc}  {t_{11} } & {t_{12} } \\ {\frac{\alpha_{2} }{\alpha_{1} } t_{21} } & {t_{22} } \end{array}\right].
\end{split}
\]
Since $q_{i}\in \Sigma(\sigma_2)\setminus \Sigma(\alpha_1)$ and  $\Sigma(\sigma_1) \subset \Sigma(\alpha_1),$ then   $q_{i}\in \Sigma(\sigma_2)\setminus \Sigma(\sigma_1)$. Hence, $q_{i}\in \Sigma\left(\frac{\sigma_{2} }{\sigma_{1} } \right)$. As
$q_{i}\notin \Sigma(\alpha_1)$ therefore
$q_{i}\in \Sigma\left(\frac{\alpha_{2} }{\alpha_{1} } \right)$ and $q_{i}\in \Sigma\left(\frac{\sigma_{2} }{\sigma_{1} } \right)\cap  \Sigma \left(\frac{\alpha_{2} }{\alpha_{1} } \right)$. Therefore
\[
P'_{S}(P'_{A})^{-1} = \left[\begin{array}{cc}  {p'_{11} } & {p'_{12} } \\ {q_{i} p'_{21} } & {p'_{22} } \end{array}\right]   \qquad\qquad   (p'_{mn}\in R).
\]
Consequently, \eqref{E:5} holds regardless of the choice of $P_{S}$ and $P_{A}$.

Now we need to proceed in the same way with the remaining elements of the set $\{ q_{1}, \ldots, q_{k} \}$.
As a result, the matrix $P_{S}$ takes the form described in Theorem~\ref{T:2}(ii).
\bigskip

 {\it Sufficiency.}
Let  $S=LM$, in which the nontrivial divisor  $L:=P_L^{-1}\cdot  \diag(\lambda_1, \lambda_2)\cdot Q_L^{-1}$ has the form \eqref{E:2} .
\bigskip

\noindent
\underline{Case 1.} If  $\Sigma (\sigma_{2} )\subseteq \Sigma (\alpha_{1} )$, then  $\Sigma (\lambda_{2} )\subseteq \Sigma (\alpha_{1} )$ by  Fact \ref{F:2}.
This yields   $(\alpha_{2}, \lambda_{2}, \alpha_{1})\neq 1$, so $(A, L)_l \neq I$ by  Fact \ref{F:1}(ii) for arbitrary $P_S\in {\GL}_2(R)$.
\bigskip

\noindent
\underline{Case 2.}
Let $\Sigma (\sigma_{2} ) \subseteq \Sigma (\alpha_{1})  \cup \{q_{1}, \ldots , q_{k} \}$  for   $k \geq 1$ and each $q_{i}\not\in \Sigma (\alpha_{1})$. If
$1\neq \gamma \in \Sigma (\lambda_{2})  \cap\Sigma (\alpha_{1})$, then
$L=L_1 L_2$, where
\[
\textstyle
L_1:=P_L^{-1}\cdot  \diag(1, \gamma)
\quad \text{and}\quad
L_2:=\diag \left(\lambda_1, \frac{\lambda_2}{\gamma}\right)\cdot Q_L^{-1}.
\]
According to the above considerations, $L_1$ is a left divisor of $A$.

Using  $(\alpha_{2}, \gamma, \alpha_{1})\neq 1$ and Fact \ref{F:1}(ii) we have  $(A, L_1)_l \neq I$. The element $\det(L_1)$ is indecomposable in $R$, so  $A= L_1A_1$ by Lemma \ref{LLM:1} and  $(A, L)_l \neq I$.

Suppose   $\delta \in \{ q_{1},  \ldots,  q_{k} \}\cap  \Sigma(\lambda_2)$.
It is easy to see that $L=F_1 F_2$, where
\[
F_1:=P_L^{-1}\cdot  \diag(1, \delta)
\qquad \text{and}\qquad
\textstyle F_2:=\diag \left(\lambda_1, \frac{\lambda_2}{\delta}\right)\cdot Q_L^{-1}.
\]
The set of all left divisors of $S$ with Smith normal form $\diag(1, \delta)$ (see Fact~\ref{F:2}) is given by
\[
{\bf W}:=\left\{\left( \boldsymbol{\mathrm{L}}_{\sigma_1, \delta}  P_{S} \right)^{-1} \cdot  \diag({1},  \delta) \cdot  \GL_{2} (R)\right\}.
\]
Since $(\delta, \sigma_1)=1$, any matrix $D\in {\bf W}$ can be written in the form
$D=P_{D}^{-1}\cdot  \diag (1, \delta)\cdot  Q_{D}^{-1}$,
where $P_{D}=\left[\begin{array}{cc}  {l_{11} } & {l_{12} } \\ {\delta  l_{12} } & {l_{22} } \end{array}\right] P_{S}$, and
$Q_{D} \in \GL_2(R)$. Consequently,   we have
\[
\begin{split}
P_{D} P_{A}^{-1}
&= \left[\begin{array}{cc}   {l_{11} } & {l_{12} } \\ {\delta  l_{12} } & {l_{22} } \end{array}\right] P_{S} P_{A}^{-1}\\
&= \left[\begin{array}{cc}  {l_{11} } & {l_{12} } \\ {\delta  l_{12} } & {l_{22}}\end{array}\right]
\cdot
\left[\begin{array}{cc}   {m_{11} } & {m_{12} } \\ {q_{1} \cdots q_{k} m_{21} } & {m_{22} } \end{array}\right]=
\left[\begin{array}{cc}   {l'_{11} } & {l'_{12} } \\ {\delta  l'_{12} } & {l'_{22} } \end{array}\right],
\end{split}
\]
so   $P_{D} = \begin{pmatrix}  {l'_{11} } & {l'_{12} } \\ {\delta  l'_{12} } & {l'_{22} } \end{pmatrix} P_{A}$.
Therefore $A=DA_{2}$ by Fact  \ref{F:2}.   It follows that each left  divisor $D$ of $S$ with $\mathrm{SNF}(D)=\diag(1, q_i)$ for $i=1, \ldots, k$  (including $L_1$)
is a left divisor of the  matrix  $A$ too. Consequently  $F_1:=P_L^{-1}\cdot  \diag(1, \delta)$ is a  left divisor of $A$. It means that $(A,L)_l\not=I$.
\end{proof}

Let $A$ and $B$ be nonsingular matrices. We study the properties and structure of the left divisors of $B$ that have a nontrivial left common divisor with $A$.

\begin{lemma}\label{LLM:2}
Let  $R$  be a  commutative   PID and  let $A,  S, T$ be nonsingular matrices in $R^{2\times 2}$. If all left divisors of $S$ have  a common left divisor with $A$, then
\[
\Sigma (S)\subseteq \Sigma \big((A, ST)_{l} \big).
\]
\end{lemma}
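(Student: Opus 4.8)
The plan is to reduce the asserted inclusion of spectra to a single divisibility test for each indecomposable divisor of $\sigma_2$, and then to verify that test using the structure results already at hand. Write $\mathrm{SNF}(A)=\diag(\alpha_1,\alpha_2)$, $\mathrm{SNF}(B)=\diag(\beta_1,\beta_2)$, $\mathrm{SNF}(S)=\diag(\sigma_1,\sigma_2)$, and let $\tau_{21}$ be the $(2,1)$-entry of $P_B P_A^{-1}$, as in Fact~\ref{F:1}. By Fact~\ref{F:1}(i) the second invariant factor of $(A,B)_l$ is $\gamma:=(\alpha_2,\beta_2,[\alpha_1,\beta_1]\tau_{21})$, so $\Sigma\big((A,B)_l\big)=\Sigma(\gamma)$ while $\Sigma(S)=\Sigma(\sigma_2)$. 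It therefore suffices to fix an arbitrary indecomposable $p\mid\sigma_2$ and show that $p\mid\gamma$, i.e. that $p\mid\alpha_2$, $p\mid\beta_2$ and $p\mid[\alpha_1,\beta_1]\tau_{21}$.

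First I would unpack the two inputs. The hypothesis that every left divisor of $S$ has a nontrivial left common divisor with $A$ is precisely the left-hand condition of Theorem~\ref{T:2} for the pair $(A,S)$; hence $\Sigma(\sigma_i)\subseteq\Sigma(\alpha_i)$ for $i=1,2$ and one of its alternatives (i), (ii) holds. In particular $p\in\Sigma(\sigma_2)\subseteq\Sigma(\alpha_2)$ gives $p\mid\alpha_2$ immediately. Since $B=ST$, the matrix $S$ is a left divisor of $B$, so Fact~\ref{F:2} yields $\sigma_i\mid\beta_i$ and $P_S=LP_B$ with $L\in\boldsymbol{\mathrm{L}}_{\beta_1,\sigma_2}$; from $p\mid\sigma_2\mid\beta_2$ we obtain $p\mid\beta_2$. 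Thus two of the three required divisibilities are free.

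The remaining and central task is $p\mid[\alpha_1,\beta_1]\tau_{21}$, which I would split according to whether $p$ divides $\alpha_1$ or $\beta_1$. If $p\mid\alpha_1$ or $p\mid\beta_1$, then $p\mid[\alpha_1,\beta_1]$ and the divisibility is clear; this already settles alternative (i) of Theorem~\ref{T:2}, since there $\Sigma(\sigma_2)\subseteq\Sigma(\alpha_1)$ forces $p\mid\alpha_1$. The genuinely delicate case is $p\nmid\alpha_1$ and $p\nmid\beta_1$, where one must show $p\mid\tau_{21}$ itself. Here $p\in\Sigma(\sigma_2)\setminus\Sigma(\alpha_1)$, so alternative (ii) applies and $p\in\{q_1,\dots,q_k\}$; the shape of $P_S$ given by Theorem~\ref{T:2}(ii) shows that $P_S P_A^{-1}=\bigl[\begin{smallmatrix} m_{11} & m_{12} \\ (q_1\cdots q_k)m_{21} & m_{22}\end{smallmatrix}\bigr]$ has its $(2,1)$-entry divisible by $p$.

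The obstacle is that $\tau_{21}$ lives in $P_B P_A^{-1}$, not in $P_S P_A^{-1}$, so divisibility must be transported from the latter to the former. The bridge is $P_B P_A^{-1}=L^{-1}\,(P_S P_A^{-1})$, coming from $P_S=LP_B$. Writing out $L^{-1}$, its $(2,1)$-entry is a unit multiple of $\tfrac{\sigma_2}{(\sigma_2,\beta_1)}l_{12}$, and a $p$-adic valuation count shows $v_p\big(\tfrac{\sigma_2}{(\sigma_2,\beta_1)}\big)=v_p(\sigma_2)\geq 1$ exactly because $p\nmid\beta_1$, so that entry is divisible by $p$. Expanding $\tau_{21}=(L^{-1})_{21}(P_SP_A^{-1})_{11}+(L^{-1})_{22}(P_SP_A^{-1})_{21}$, the first summand is divisible by $p$ through the entry of $L^{-1}$ and the second through the entry of $P_S P_A^{-1}$, whence $p\mid\tau_{21}$. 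Assembling the three divisibilities gives $p\mid\gamma$ for every indecomposable $p\mid\sigma_2$, which is exactly $\Sigma(S)\subseteq\Sigma\big((A,B)_l\big)$. I expect this valuation-and-transport step to be the only real subtlety; the rest is a direct appeal to Facts~\ref{F:1}, \ref{F:2} and Theorem~\ref{T:2}.
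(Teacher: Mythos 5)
Your proof is correct, and there is no circularity in invoking Theorem \ref{T:2} (it is stated and proved before, and independently of, this lemma), but your route is genuinely different from the paper's. The paper's own proof is a short direct argument that bypasses Theorem \ref{T:2} and the explicit Smith normal form of $(A,B)_l$ entirely: for each indecomposable $\mu\in\Sigma(S)$, write $S=S_1S_2$ with $S_1:=P_S^{-1}\cdot\diag(1,\mu)$ and $S_2:=\diag(\sigma_1,\sigma_2')\cdot Q_S^{-1}$; the hypothesis gives $(A,S_1)_l\neq I$, and since $\det(S_1)=\mu$ is indecomposable, Lemma \ref{LLM:1} upgrades this to $S_1$ being a left divisor of $A$; as $S_1$ is also a left divisor of $B=ST$, it is a common left divisor of $A$ and $B$ and hence a left divisor of $(A,B)_l$, giving $\mu\in\Sigma\big((A,B)_l\big)$. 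You instead combine Fact \ref{F:1}(i) (reducing the claim to the three divisibilities $p\mid\alpha_2$, $p\mid\beta_2$, $p\mid[\alpha_1,\beta_1]\tau_{21}$), Theorem \ref{T:2} (the shape of $P_SP_A^{-1}$), and Fact \ref{F:2} (the shape of $L$ in $P_S=LP_B$), and then transport $p$-divisibility into $\tau_{21}$ via $P_BP_A^{-1}=L^{-1}(P_SP_A^{-1})$; your case split, the valuation count $v_p\bigl(\sigma_2/(\sigma_2,\beta_1)\bigr)=v_p(\sigma_2)\ge 1$ when $p\nmid\beta_1$, and the $(2,1)$-entry expansion are all sound (up to the harmless index slip $l_{12}$ versus $l_{21}$ in the $(2,1)$-entry of $L$, which mirrors a typo in \eqref{E:3} itself). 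What your approach buys is transparency about where the divisibility comes from: you have in effect re-derived, one indecomposable at a time, the condition \eqref{E:6} of Lemma \ref{LLM:3}, so your argument exhibits this lemma as a per-prime shadow of Lemma \ref{LLM:3}. What it costs is economy and dependence: the paper's proof needs only Lemma \ref{LLM:1} and the defining property of the left greatest common divisor, while yours rests on the strongest structural result of the paper.
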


\begin{proof} Let $ST:=P_{ST}^{-1} \cdot \diag (\beta_{1},  \beta_{2}) \cdot Q_{ST}^{-1}$ and
$S=P_{S}^{-1} \cdot \diag (\sigma_{1},  \sigma_{2}) \cdot Q_{S}^{-1}$ have form \eqref{E:2}.
Let  $ \mu \in \Sigma (S)$. Thus  $\sigma_{2} =\mu \sigma_{2} '$ and $S=S_{1} S_{2}$, where
\[
\textstyle
S_1:=P_{S}^{-1} \cdot \diag (1, \mu) \qquad\text{and}\qquad   S_{2}:=\diag (\sigma_{1},  \sigma_{2} ') \cdot  Q_{S}^{-1}.
\]
By assumption, $(A, S_{1})_{l} \ne I$. Since $\det(S_{1})$ is an indecomposable element of $R$, it follows from Lemma~\ref{LLM:1} that $S_{1}$ is a left divisor of $A$.
Hence, $S_{1}$ is a left common divisor of the matrices $A$ and $ ST$, and thus a left divisor of $(A, ST)_{l}$.
Consequently, $\mu = \Sigma(S_{1}) \subseteq \Sigma\big((A, ST)_{l}\big)$, and therefore $\Sigma(S) \subseteq \Sigma\big((A, ST)_{l}\big)$.
\end{proof}

\begin{lemma}\label{LLM:3}
Let  $R$  be a  commutative   PID and let $A,B,S\in {R}^{2\times 2}$ be nonsingular matrices of the  form \eqref{E:2}:
\[
\begin{split}
A:&=P_{A}^{-1}  \cdot  \diag (\alpha_{1},  \alpha_{2} ) \cdot  Q_{A}^{-1},\qquad  B:=P_{B}^{-1} \cdot  \diag (\beta_{1},  \beta_{2} ) \cdot  Q_{B}^{-1},\quad \\
S:&=  P_{S}^{-1} \cdot  \diag (\sigma_{1},  \sigma_{2} ) \cdot  Q_{S}^{-1},\qquad \text{and} \quad [\tau_{ij}]:=P_{B} P_{A}^{-1}.
\end{split}
\]
Each left divisor of the matrix $S$ has a nontrivial left common divisor with  $A$ and $B=ST$ if and only if $S$ satisfies the conditions of Theorem  \ref{T:2} and
\begin{equation} \label{E:6}
\left(
\textstyle
\frac{\sigma_{2} }{(\sigma_{2}, \beta_{1} )}, q_{1} \cdots q_{k}
 \right)| \tau_{21},
\end{equation}
where $\sigma_{2} =q_{1}^{r_1} \cdots q_{k}^{r_k} d_{2}$  for  $q_{1}, \ldots , q_{k} \in
\Sigma (\sigma_{2})  \setminus \Sigma (\alpha_{1})$, $r_i \in \mathbb{N} \cup \{0\}$, $i= 1, \ldots , k,$
and  \linebreak  $\Sigma (d_{2} )\subseteq  \Sigma (\alpha_{1})$.
\end{lemma}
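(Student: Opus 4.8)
The plan is to combine the characterization from Theorem \ref{T:2} with the extra structural constraint coming from the factorization $B=ST$. The hypothesis ``each left divisor of $S$ has a nontrivial left common divisor with $A$'' is, by Theorem \ref{T:2}, equivalent to $\Sigma(\sigma_i)\subseteq\Sigma(\alpha_i)$ together with the normal form of $P_S$ relative to $P_A$. So the heart of the matter is to show that, \emph{in the presence of the factorization} $B=ST$, these conditions translate precisely into the divisibility statement \eqref{E:6} involving $\tau_{21}$, where $[\tau_{ij}]=P_BP_A^{-1}$. The key observation is that $B=ST$ forces a relation between $P_B$ and $P_S$ (by Fact \ref{F:2}, since $S$ is a left divisor of $B$ we have $P_S=L'P_B$ for some $L'\in\boldsymbol{\mathrm{L}}_{\beta_1,\sigma_2}$), and this lets me transfer the condition on $P_S$ (phrased in Theorem \ref{T:2} relative to $P_A$) into a condition on $P_B$ relative to $P_A$, i.e.\ on $[\tau_{ij}]$ itself.

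First I would invoke Fact \ref{F:2} on the factorization $B=ST$ to record the relation $P_S=L'P_B$ with $L'$ in the appropriate $\boldsymbol{\mathrm{L}}$-set, and to note $\sigma_i\mid\beta_i$. Next I would apply the ``only if'' direction by assuming each left divisor of $S$ shares a nontrivial left common divisor with $A$; Theorem \ref{T:2}(ii) then gives the shape of $P_S$ relative to $P_A$, namely that its $(2,1)$-entry is divisible by $q_1\cdots q_k$ after conjugating by $P_A$. I would then substitute $P_S=L'P_B$ and compute $P_SP_A^{-1}=L'\,[\tau_{ij}]$ explicitly; reading off the $(2,1)$-entry of this product and comparing with the required divisibility by $q_1\cdots q_k$ yields a congruence on $\tau_{21}$. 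The role of the factor $\tfrac{\sigma_2}{(\sigma_2,\beta_1)}$ is to measure exactly how much of $\sigma_2$ survives the passage $P_S=L'P_B$ (the $(2,1)$-entry of $L'$ carries a factor $\tfrac{\sigma_2}{(\sigma_2,\beta_1)}$ by the definition \eqref{E:3} of $\boldsymbol{\mathrm{L}}_{\beta_1,\sigma_2}$), so that the genuine obstruction on $\tau_{21}$ is exactly divisibility by $\bigl(\tfrac{\sigma_2}{(\sigma_2,\beta_1)},\,q_1\cdots q_k\bigr)$. The ``if'' direction reverses this: given \eqref{E:6}, I would reconstruct a valid $P_S$ of the Theorem \ref{T:2}(ii) shape, verify $\Sigma(\sigma_i)\subseteq\Sigma(\alpha_i)$ is consistent with $B=ST$, and conclude via Theorem \ref{T:2} that every left divisor of $S$ meets $A$ nontrivially.

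The main obstacle I anticipate is the bookkeeping of indecomposable primes across three simultaneous constraints: the $q_i$ are defined as the primes in $\Sigma(\sigma_2)\setminus\Sigma(\alpha_1)$, but the divisibility $\sigma_i\mid\beta_i$ and the factor $(\sigma_2,\beta_1)$ redistribute these primes between the ``adequate'' and ``relatively prime'' parts of $\sigma_2$. Concretely, one must check that writing $\sigma_2=q_1^{r_1}\cdots q_k^{r_k}d_2$ with $\Sigma(d_2)\subseteq\Sigma(\alpha_1)$ is compatible with the $\boldsymbol{\mathrm{L}}$-set computation, and that the greatest common divisor on the left of \eqref{E:6} correctly isolates only those prime powers of the $q_i$ that are \emph{not} already absorbed by $(\sigma_2,\beta_1)$. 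The delicate point is that the $(2,1)$-entry of the product $L'[\tau_{ij}]$ mixes $\tau_{11}$ and $\tau_{21}$ through the entry $\tfrac{\sigma_2}{(\sigma_2,\beta_1)}l'_{21}$ of $L'$, so extracting a clean divisibility condition purely on $\tau_{21}$ requires the coprimality $(\sigma_1,\text{(relevant }q_i))=1$ and a careful prime-by-prime argument, exactly as in the independence-of-choice computation already carried out for $P_S$ in the proof of Theorem \ref{T:2}.
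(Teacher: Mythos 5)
Your forward direction (divisor property together with $B=ST$ implies \eqref{E:6}) is essentially the paper's own argument: Fact \ref{F:2} applied to $B=ST$ gives $P_S=L'P_B$ with the $(2,1)$-entry of $L'$ divisible by $\frac{\sigma_2}{(\sigma_2,\beta_1)}$, Theorem \ref{T:2} gives $P_S=NP_A$ with the $(2,1)$-entry of $N$ divisible by $q_1\cdots q_k$, and equating the two expressions yields the key identity. The paper writes it as $[\tau_{ij}]=(L')^{-1}N$, noting that inverting $L'$ preserves its shape, so the $(2,1)$-entry of the product is visibly divisible by $\bigl(\frac{\sigma_2}{(\sigma_2,\beta_1)},\,q_1\cdots q_k\bigr)$; your variant, reading off the $(2,1)$-entry of $L'[\tau_{ij}]=N$, also works, but only after the coprimality observation you flag (it succeeds because $q_1\cdots q_k$ is squarefree and $\det L'$ is a unit, forcing $l'_{22}$ to be prime to the gcd). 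So far this is the same proof.

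The genuine gap is in the converse. You say you would ``reconstruct a valid $P_S$ of the Theorem \ref{T:2}(ii) shape \ldots and conclude via Theorem \ref{T:2}'', but you never say how such a $P_S$ is produced, and that production is the entire content of this direction. The matrix $P_S$ must satisfy \emph{two} constraints simultaneously: $P_S=L'P_B$ with $L'$ of the $\frac{\sigma_2}{(\sigma_2,\beta_1)}$-shape (otherwise, by Fact \ref{F:2}, $S$ fails to be a left divisor of $B$ and the hypothesis $B=ST$ is lost), and $P_S=NP_A$ with $N$ of the $q_1\cdots q_k$-shape (so that Theorem \ref{T:2} applies). The existence of such a $P_S$ is equivalent to factoring the invertible matrix $[\tau_{ij}]=P_BP_A^{-1}$ as $C^{-1}D$, where $C^{-1}$ has $(2,1)$-entry divisible by $\frac{\sigma_2}{(\sigma_2,\beta_1)}$ and $D$ has $(2,1)$-entry divisible by $q_1\cdots q_k$. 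Condition \eqref{E:6} is exactly what makes this splitting possible, and the paper obtains it from a nontrivial factorization result over PIDs, \cite[Lemma 5.10, p.\,193]{Mon}; note that this existential construction is precisely what the lemma is for, since in the proof of Theorem \ref{T:1} the adequate part $S$ must be built from nothing but $A$, $B$ and a prescribed Smith form. Reading the converse as ``the conditions of Theorem \ref{T:2} already imply the divisor property'' makes \eqref{E:6} redundant and the lemma vacuous, so your plan, as stated, either proves a triviality or is missing its key ingredient. Secondarily, the paper also verifies that \eqref{E:6} does not depend on the (non-unique) choice of $P_A$ and $P_B$, a check your plan omits.
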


\begin{proof} Necessity.  Since $S$ is a left divisor of $B$,  $\Sigma (\sigma_i )\subseteq  \Sigma (\beta_i)$
for $i=1, 2$ and $P_{S}=L  P_{B}$, where
\[
L:=\left[\begin{array}{cc}  {l_{11} } & {l_{12} } \\ {\frac{\sigma_{2} }{(\sigma_{2}, \beta_{1} )} l_{21} } & {l_{22} } \end{array}\right]  \qquad (l_{ij}\in R)
\]
by  Fact  \ref{F:2}.  Each left divisor of $S$ has a left common divisor with $A$, so $S$ satisfies the conditions of  Theorem  \ref{T:2}. Hence
 $P_{S} =NP_{A}$, where
\[
N:=
\left[\begin{array}{cc}  {n_{11} } & {n_{12} } \\ {q_{1} \cdots q_{k} n_{21} } & {n_{22} } \end{array}\right]
  \qquad(n_{ij}\in R).
 \]
Consequently, $P_{S} =NP_{A}=L  P_{B}$.
It follows that
\[
\begin{split}
[\tau_{ij} ]=P_{B} P_{A}^{-1}&=L^{-1} N =\underbrace{ \left[\begin{array}{cc}  {l'_{11} } & {l'_{12} } \\ {\frac{\sigma_{2} }{(\sigma_{2}, \beta_{1} )} l'_{21} } & {l'_{22} } \end{array}\right]  }_{L^{-1} }   \left[\begin{array}{cc}  {n_{11} } & {n_{12} } \\ {q_{1} \cdots q_{k} n_{21} } & {n_{22} } \end{array}\right]\\
&= \left[\begin{array}{cc}  {m_{11} } & {m_{12} } \\ {\left(\frac{\sigma_{2} }{(\sigma_{2}, \beta_{1} )},  q_{1} \cdots q_{k} \right)m_{21} } & {m_{22} } \end{array}\right] \qquad( l'_{ij}, m_{ij}\in R).
\end{split}
\]
Therefore, the condition   \eqref{E:6} is fulfilled.

{\it  Sufficiency.}  There  exist  invertible matrices (see  \cite[Lemma 5.10, p.\,193]{Mon})
\[
C^{-1}:=\left[\begin{array}{cc}  {c_{11} } & {c_{12} } \\ {\frac{\sigma_{2} }{(\sigma_{2}, \beta_{1} )} c_{21} } & {c_{22} } \end{array}\right]\quad \text{and}   \quad  D:=\left[\begin{array}{cc}  {d_{11} } & {d_{12} } \\ {q_{1} \cdots q_{k} d_{21} } & {d_{22} } \end{array}\right]
\]
such that $P_{B} P_{A}^{-1}=C^{-1} D$.  The matrix $S:=(CP_{B} )^{-1}\cdot  \diag (\sigma_{1},  \sigma_{2} )$ is a left divisor of  $B$ by Fact  \ref{F:2}. Moreover,  each left divisor of  $S=(DP_{A})^{-1}\cdot  \diag (\sigma_{1},  \sigma_{2} )$ has a nontrivial left common divisor with  $A$ by Theorem \ref{T:2}.

Let us show that  \eqref{E:6} holds independently of the choices of $P_{B}, P_{A}\in \GL_2(R)$.
Indeed, if  we choose a different  ordered pair $(P'_{B}, P'_{A})\not=(P_{B}, P_{A})$, then  $P'_{B} =HP_{B}$ and $P'_{A} =TP_{A}$ by \cite[Property 2.2, p.\,61]{Mon},  where
\[
H:=  \left[\begin{array}{cc}  {h_{11} } & {h_{12} } \\ {\frac{\beta_{2} }{\beta_{1} } h_{21} } & {h_{22} } \end{array}\right]\quad  \text{and}\quad T^{-1}:=  \left[\begin{array}{cc}  {t_{11} } & {t_{12} } \\ {\frac{\alpha_{2} }{\alpha_{1} } t_{21} } & {t_{22} } \end{array}\right]\quad( h_{ij}, t_{ij}\in R).
\]
Thus
\[
\begin{split}
[\tau'_{ij} ]:=P'_{B} (P'_{A} )^{-1} &=HP_{B} P_{A}^{-1} T^{-1} =H[\tau_{ij} ]T^{-1}\\
&=  \left[\begin{array}{cc}  {h_{11} } & {h_{12} } \\ {\frac{\beta_{2} }{\beta_{1} } h_{21} } & {h_{22} } \end{array}\right]  \cdot  \left[\begin{array}{cc}  {\tau_{11} } & {\tau_{12} } \\ {\tau_{21} } & {\tau_{22} } \end{array}\right]  \cdot { \left[\begin{array}{cc}  {t_{11} } & {t_{12}} \\ {\frac{\alpha_{2} }{\alpha_{1} } t_{21}} & {t_{22}} \end{array}\right]}.
\end{split}
\]
Hence
\[
\begin{split}
\tau'_{21} = \tau_{21}(h_{22} t_{11}) +\textstyle\frac{\beta_{2} }{\beta_{1} }(h_{21} \tau_{11}t_{11}+\frac{\alpha_{2} }{\alpha_{1} }h_{21} \tau_{12}t_{21}) +\frac{\alpha_{2} }{\alpha_{1}}(h_{22} \tau_{22} t_{21}).
\end{split}
\]
Obviously, $\frac{\beta_{2} (\sigma_{2}, \beta_{1} )}{\beta_{1} \sigma_{2} } =\frac{(\beta_{2} \sigma_{2}, \beta_{2} \beta_{1} )}{\beta_{1} \sigma_{2} } \in R$, so   $\frac{\sigma_{2} }{(\sigma_{2}, \beta_{1} )} |\frac{\beta_{2} }{\beta_{1} }$.
Taking into account that  $q_{1}, \ldots,   q_{k} \in \Sigma (\alpha_{2} )$ and  $(q_{1} \cdots q_{k},  \alpha_{1} )=1$, we obtain that
$
\textstyle(q_{1} \cdots q_{k} )|\frac{\alpha_{2} }{\alpha_{1}}$,
and
$\left(\frac{\sigma_{2} }{(\sigma_{2}, \beta_{1} )},  q_{1} \cdots q_{k} \right)|\left(\frac{\beta_{2} }{\beta_{1} }, \frac{\alpha_{2} }{\alpha_{1} } \right)$.
Consequently,   $\left(\frac{\sigma_{2} }{(\sigma_{2}, \beta_{1} )},  q_{1} \cdots q_{k} \right)|\tau'_{21}$.
\end{proof}

\begin{proof}[\bf Proof of Theorem \ref{T:1}]  If $(A,B)_{l}=I$ then $ B$ is adequate to $A$.

Let  $(A,B)_{l}\neq I$, where  $A:=P_{A}^{-1}\cdot  \diag (\alpha_{1},  \alpha_{2})\cdot  Q_{A}^{-1}$ and   $B=P_{B}^{-1} \cdot \diag (\beta_{1},  \beta_{2}) \cdot  Q_{B}^{-1}$ have  the form \eqref{E:2}.
Set $\mathrm{SNF}\big((A,B)_{l}\big):=\diag (\omega_{1},  \omega_{2} )$ and   $[\tau_{ij}]:=P_{B} P_{A}^{-1}$.

 Due to  Lemma \ref{LLM:2},   if $D$ is a left divisor of $B$ and none of its left divisors is relatively prime to $A$, then
$\Sigma(D)\subseteq \Sigma\big((A,B)_{l}\big)$. By Fact \ref{F:1}(i),   $\Sigma (\omega_{i} )\subseteq \Sigma (\alpha_{i} )$ for $i=1, 2$.  Set
\[
\begin{split}
\Sigma (\omega_{1} )& =\Sigma(( \alpha_{1}, \beta_1) ):=\{ p_{1}, \ldots, p_{m} \};\\ \Sigma (\omega_{2} ):&=\{ p_{1}, \ldots, p_{n} \} \cup \{ q_{1}, \ldots, q_{l} \} \cup \{ q_{l+1}, \ldots, q_{k} \},
\end{split}
\]
where
\[
\{
p_{1}, \ldots,  p_{n} \} \subseteq \Sigma (\alpha_{1} ), \quad n\geq m,  \quad q_{i}\notin  \Sigma (\alpha_{1}), \quad i=1, \ldots , k,  \]
\[
 \{ q_{1}, \ldots, q_{l} \} \subseteq \Sigma (\tau_{21} ), \qquad \{ q_{l+1}, \ldots, q_{k} \} \cap \Sigma (\tau_{21} )=\varnothing.
\]
By Fact \ref{F:1}(i), we have $\omega_{i} \mid \beta_{i}$ for $i = 1, 2$, so we can write
\begin{equation}\label{E:7}
\beta_{1}=\underbrace{\left(p_{1}^{r_{1} } \cdots p_{m}^{r_{m} } \right)}_{\sigma_{1} } \cdot \left(q_{1}^{u_{1} } \cdots q_{l}^{u_{l} } \right)\cdot \left(q_{l+1}^{u_{l+1} } \cdots q_{k}^{u_{k} } \right)\cdot d=\sigma_{1} \cdot \beta '_{1}, \quad u_i \in \mathbb{N}\cup \{0\},
\end{equation}
\begin{equation}\label{E:8}
\begin{split}
\beta_{2} &=\underbrace{\left(\left(p_{1}^{r'_{1} } \cdots p_{m}^{r'_{m} } \right)\cdot \left(p_{m+1}^{r_{m+1} } \cdots p_{n}^{r_{n} } \right)\cdot \left(q_{1}^{u'_{1} } \cdots q_{l}^{u'_{l} } \right)\cdot \left(q_{l+1}^{u_{1+1} } \cdots q_{k}^{u_{k} } \right)\right)}_{\sigma_{2} } \beta '_{2}\\
& =\sigma_{2} \cdot \beta '_{2},
\end{split}
\end{equation}
where  $(d, \alpha_{2} )=1$,\quad
$(\beta '_{2}, p_{1} \cdots p_{n} \cdot q_{1} \cdots q_{l} )=1$,  \quad  $r'_{i} \ge r_{i}$,
for  $i=1,\ldots, m$ and $ u'_{j} \ge u_{j}\geq 0$  for  $j=1,\ldots, l$.
It follows that
\[
\textstyle
\frac{\sigma_{2} }{(\sigma_{2}, \beta_{1} )} =\left(p_{1}^{r'_{1} -r_{1} } \cdots p_{m}^{r'_{m} -r_{m} } \right)\cdot \textstyle\left(p_{m+1}^{r_{m+1} } \cdots p_{n}^{r_{n} } \right)\cdot \left(q_{1}^{u'_{1} -u_{1} } \cdots q_{l}^{u'_{l} -u_{l} } \right).\]
Since $q_{1}, \ldots, q_{l} \in \Sigma (\tau_{21} )$,
\[
\textstyle
\left(\frac{\sigma_{2} }{(\sigma_{2}, \beta_{1} )}, q_{1} \cdots q_{k} \right)=\left(q_{1}^{u'_{1} -u_{1} } \cdots q_{l}^{u'_{l} -u_{l} }, q_{1} \cdots q_{k} \right)|\tau_{21}.\]
According to   \cite[Lemma 5.10, p.\,193]{Mon},  we can write
\begin{equation} \label{E:9}
\textstyle P_{B} P_{A}^{-1} = \left[\begin{array}{cc}  {f_{11} } & {f_{12} } \\ {\frac{\sigma_{2} }{(\sigma_{2}, \beta_{1} )} f_{21} } & {f_{22} } \end{array}\right]    \left[\begin{array}{cc}  {l_{11} } & {l_{12} } \\ {q_{1} \cdots q_{k} l_{21} } & {l_{22} } \end{array}\right]
\quad ( f_{ij}, l_{ij}\in R)
.
\end{equation}
Let us consider the  matrix
\begin{equation}\label{Matrix_S_1}
S:=\left( \left[\begin{array}{cc}  {f_{11} } & {f_{12} } \\ {\frac{\sigma_{2} }{(\sigma_{2}, \beta_{1} )} f_{21} } & {f_{22} } \end{array}\right]  ^{-1} P_{B} \right)^{-1}  \left[\begin{array}{cc}  {\sigma_{1} } & {0} \\ {0} & {\sigma_{2} } \end{array}\right].
\end{equation}
Using Fact \ref{F:2},  $S$ is the  left divisor of $B$, i.e.  $B=ST$ for some $T\in R^{2\times 2}$.
From  \eqref{E:9}, we have
\[
\left[\begin{array}{cc}  {f_{11} } & {f_{12} } \\ {\frac{\sigma_{2} }{(\sigma_{2}, \beta_{1} )} f_{21} } & {f_{22} } \end{array}\right]^{-1}   P_{B} = \left[\begin{array}{cc}  {l_{11} } & {l_{12} } \\ {q_{1} \cdots q_{k} l_{21} } & {l_{22} } \end{array}\right]  P_{A}.
\]
It follows that the matrix $S$   can also be written in  the following form:
\begin{equation}\label{Matrix_S_2}
S=\left( \left[\begin{array}{cc}  {l_{11} } & {l_{12} } \\ {q_{1} \cdots q_{k} l_{21} } & {l_{22} } \end{array}\right]  P_{A} \right)^{-1}  \left[\begin{array}{cc}  {\sigma_{1} } & {0} \\ {0} & {\sigma_{2} } \end{array}\right].
\end{equation}
Consequently, each left divisor of $S$ has a nontrivial common left divisor with $A$ by Theorem~\ref{T:2}. Therefore, $S$ satisfies part~(i) of the definition of an adequate part of $B$ with respect to $A$.
\smallskip

Assume that $T = T_{1} T_{2}$ is a decomposition of $T$ into a product of two of its nontrivial divisors. Let us consider the following two cases:
\bigskip

\noindent
\underline{Case 1.} Let  $\Sigma(ST_1) \not\subseteq  \Sigma ((A,B)_{l}) \neq \varnothing$. Hence, there exists $t\in \Sigma(ST_1)\setminus  \Sigma ((A,B)_{l})$. It means that  $ST_{1}$ has a left divisor $L$ with $\mathrm{SNF}(L)=\diag(1, t)$ such that  $(A,L)_{l} =I$ (by the same trick as the one used in the proof of  Lemma \ref{LLM:2}).
\bigskip

\noindent
\underline{Case 2.} Let  $\Sigma (ST_{1} )\subseteq \Sigma ((A,B)_{l})$ and
$\mathrm{SNF}(ST_{1} )=\diag (\mu_1, \mu_2)$.
Based on the construction of the elements $\sigma_1$ and $\sigma_2$, it follows that   $\det(ST_{1})$ has the divisor  $q_{i}^{u_{i} +1} $ in which  $l+1\le i\le k$.
\bigskip

\noindent
\underline{Case 2a.}   Let $q_{i} | \mu_1$. Any matrix with the Smith normal form $\diag (q_{i}, q_{i})$ is a left divisor of $ST_{1}$ by \cite[Theorem  5.3 p.\,152 and Property 4.11 p.\,147]{Mon}.
Consider  the matrix $M:=P_M^{-1}\diag (q_{i}, q_{i})$, where $P_M:= \left[\begin{array}{cc}  0 & 1\\ 1 & 0  \end{array}\right]P_A$. It is obvious that    $M=M_1M_2$, where  $M_1:=P_M^{-1}\diag (1, q_{i})$ and  $M_2:=\diag (q_{i}, 1)$. Since $(\alpha_1, q_{i})=1$ and $P_MP_A^{-1}= \left[\begin{array}{cc}  0 & 1\\ 1 & 0  \end{array}\right]$,  we have $(A,M_1)_{l}=I$ by  Fact \ref{F:1} (ii).
Thus $ST_{1}=MN=M_1(M_2N)$ for some $N$.
\bigskip

\noindent
\underline{Case 2b.}   Let $(q_{i}, \mu_1)=1$. Clearly  $q_{i}^{u_{i} +1}| \mu_2$.
The matrix $K := P_{ST_1}^{-1} \diag(1, q_i)$ is a left divisor of $ST_1$, and therefore also a left divisor of the matrix $B$.
Since
$
\frac{q_{i}^{u_{i} +1} }{(q_{i}^{u_{i} +1},   \beta_{1} )} =q_{i}$, we have
 ${P_{ST_{1}}:= \left[\begin{array}{cc}  {k_{11} } & {k_{12} } \\ {q_{i} k_{21} } & {k_{22} } \end{array}\right]  P_{B}}$ by Fact \ref{F:2}.
Thus
\[
\begin{split}
[\tau'_{ij}]:=P_{ST_{1}} P_{A}^{-1} &= \left[\begin{array}{cc}  {k_{11} } & {k_{12} } \\ {q_{i} k_{21} } & {k_{22} } \end{array}\right] (P_{B} P_{A}^{-1}) = \left[\begin{array}{cc}  {k_{11} } & {k_{12} } \\ {q_{i} k_{21} } & {k_{22} } \end{array}\right]   \left[\begin{array}{cc}  {\tau_{11} } & {\tau_{12} } \\ {\tau_{21} } & {\tau_{22} } \end{array}\right]  \\
&= \left[\begin{array}{cc}  {*} & {*} \\ {q_{i} k_{21} \tau_{11} +k_{22} \tau_{21} } & {*} \end{array}\right].
\end{split}
\]
The matrix $\left[\begin{array}{cc}  {k_{11}} & {k_{12}} \\ {q_{i} k_{21}} & {k_{22}} \end{array}\right]$ is invertible.
Hence, $(q_{i}, k_{22}) = 1$.
By assumption, $(q_{i}, \tau_{21}) = 1$, so $(q_{i}, \tau_{21}') = 1$.
Since $q_{i} \not\in \Sigma(\alpha_1)$, we have $(q_{i}, \alpha_{1}) = 1$, so
$(\alpha_{2}, q_{i}^{u_{i} + 1}, \alpha_{1} \tau_{21}') = 1$.
Consequently, $(A, K)_{l} = I$ by Fact~\ref{F:1}(ii).
This means that $S$ satisfies part~(ii) of the definition of an adequate part of $B$ with respect to $A$, so the set of nonsingular $2 \times 2$ matrices over $R$ is a left adequate set.
Applying the transpose operator, we obtain that this set is also a right adequate set.
Consequently, the set of all nonsingular $2 \times 2$ matrices over $R$ is an adequate set.
\end{proof}

\bigskip

\section{Some examples}

We now present an algorithm for constructing an adequate part of a matrix in $R^{2 \times 2}$.

\textbf{Example 1}. Let $R$ be a PID and  let $a,b,c, f,m, n\in R\setminus\{U(R)\cup \{0\}\}$ be  pairwise relatively prime indecomposable elements. Let
\[
\begin{split}
A&:= \diag({ab}, \;{ab^{2} cfm}),  \quad   B:= \left[\begin{array}{cc}  {1} & {0} \\ {-f} & {1} \end{array}\right]    \diag({b^{2} c}, \;{ab^{3} c^{2} fn}),\\
P_{A}&=I,\qquad P_{B}=  \left[\begin{array}{cc}  {1} & {0} \\ {f} & {1} \end{array}\right],  \quad
 [\tau_{ij}]:= P_{B} P_{A}^{-1} =P_{B}= \left[\begin{array}{cc}  {1} & {0} \\ {f} & {1} \end{array}\right].
\end{split}
\]
Clearly,   $\Sigma (A)=\{ a, b, c, f, m\}$, $\Sigma (B)= \{a, b, c, f, n \}$ and  $\mathrm{SNF}\big((A,B)_{l}\big)=\diag({b}, {ab^{2} cf})$ by Fact \ref{F:1}(i).
Using the notation of  Theorem \ref{T:1} we have  that $q_{1} q_{2} =cf$.
An adequate  part of  $B$ with respect to  $A$ (see Theorem \ref{T:1})  has the following Smith normal form  $
\diag ({b^{2}}, {ab^{3} cf}):=\diag({\sigma_{1}}, {\sigma_{2}})$.
Note that
\[
\textstyle
\left(\frac{\sigma_{2} }{(\sigma_{2}, \beta_{1} )},  q_{1} q_{2} \right)=(abf, cf)=f|\tau_{21}.
\]
It is easy to check that
\[P_{B} P_{A}^{-1} = \left[\begin{array}{cc}  {1} & {0} \\ {f} & {1} \end{array}\right]  = \left[\begin{array}{cc}  {1} & {0} \\ {abfy} & {1} \end{array}\right]   \cdot  \left[\begin{array}{cc}  {1} & {0} \\ {cfx} & {1} \end{array}\right]  \]
in which $cx+aby=1$. It follows that
\[
\left[\begin{array}{cc}  {1} & {0} \\ {-abfy} & {1} \end{array}\right]  P_{B} = \left[\begin{array}{cc}  {1} & {0} \\ {cfx} & {1} \end{array}\right]   P_{A}.
\]
Consequently, an adequate  part of  $B$ with respect to  $A$ has the following form:
\[
\begin{split}
S:&=\left( \left[\begin{array}{cc}  {1} & {0} \\ {-abfy} & {1} \end{array}\right]    \left[\begin{array}{cc}  {1} & {0} \\ {f} & {1} \end{array}\right]   \right)^{-1}  \left[\begin{array}{cc}  {b^{2} } & {0} \\ {0} & {ab^{3} cf} \end{array}\right]  \;\\
&= \left[\begin{array}{cc}  {1} & {0} \\ {f(aby-1)} & {1} \end{array}\right]    \left[\begin{array}{cc}  {b^{2} } & {0} \\ {0} & {ab^{3} cf} \end{array}\right] = \left[\begin{array}{cc}  b^{2} & {0} \\ {b^{2}f(aby-1)} & {ab^{3} cf}  \end{array}\right]
\end{split}\]
by Theorem \ref{T:1}.
In this case
$B=ST,$
where $T= \left[\begin{array}{cc}  {c} & {0} \\ {-y} & {cn} \end{array}\right]$. {\hfill \rm  $\diamond$}
\bigskip

Each commutative PID $R$ is adequate in the sense of Helmer, as noted in the Introduction.
It is easy to verify that the adequate and the relatively prime parts of an element $b \in R$ with respect to $a \in R$ are defined up to strong associates.
However, this statement does not hold in the case of the ring $R^{2 \times 2}$, as shown in the next example:
\bigskip

\textbf{Example 2}. Let  $R=\mathbb{Z}$ be the ring of integers.
 Let
$$
A:=\diag(\alpha_1, \alpha_2)= \diag(2,\;\;  2\cdot3\cdot5\cdot7),\quad B:= \left[\begin{array}{cc}  {1} & {0} \\ -3 & 1 \end{array}\right]\cdot \diag(2\cdot 3^2\cdot5^2, \;\;  2^2\cdot 3^3\cdot5^4).
$$
Then
$$
P_A=I,\quad P_B=\left[\begin{array}{cc}  {1} & {0} \\ 3 & 1 \end{array}\right],\quad
   [\tau_{ij}]:=P_B P^{-1}_A=P_B,
$$
$$
\Sigma(\alpha_1)=\{2\}, \quad \Sigma(\omega_2)=\{2, 3,5\}, \quad \Sigma(\tau_{21})=\{1, 3\},\quad \{5\}\cap \Sigma(\tau_{21})=\varnothing.
$$
According to Fact \ref{F:1}(i), $\mathrm{SNF}((A, B)_{l})=\diag(\omega_1, \omega_2)=\diag(2, 2\cdot3\cdot5)$.
The left adequate part  of $B$ with respect to $A$ has the following Smith normal form $\Phi:=\diag( 2, 2^2\cdot 3^3\cdot5^2)$ (see the proof of Theorem \ref{T:1}).
The matrices
\[S:=\left[\begin{array}{cc}  {1} & {0} \\ -3\cdot5 & 1 \end{array}\right]\cdot\Phi\quad\text{and}\quad
S_1:=\left[\begin{array}{cc}  {1} & {0} \\ 3\cdot5 & 1 \end{array}\right]\cdot\Phi
\]
are left divisors of the matrix $B$:
\[
B=S\left[\begin{array}{cc}  3^2\cdot5^2 & {0} \\ 2 & 5^2 \end{array}\right]=S_1\left[\begin{array}{cc}  3^2\cdot5^2 & {0} \\ -3 & 5^2  \end{array}\right],
\]
and are also adequate parts of $B$ with respect to   $A$ by Theorem \ref{T:2}.
However  (see \cite[Theorem 4.5, p.\,128]{Mon}) the  matrices $S$ and $S_1$ are not right   strong  associates. {\hfill \rm  $\diamond$}

\bigskip

Let $S$ be an adequate part of $B$ with respect to $A$ with the presentation  \eqref{Matrix_S_1}.  Example~2 shows that if $S'$ is another adequate part of $B$ with respect to $A$, then $S$ and $S'$ are not necessarily right associated. Based on this example, we put forward the following.

\begin{hypothesis}
The adequate part of $B$ with respect to $A$ is defined up to equivalence.
\end{hypothesis}

\bigskip

\section{Adequate  rings in the sense of Gatalevych}
Gatalevych \cite[Definition 1, p.\,116]{Gatalevich} proposed the following definition for noncommutative B\'ezout rings which was  already indicated in the Introduction.

Let $K$ be  a B\'ezout ring and let  $a\in K$. An  element $b\in K$ is called  {\it left adequate in the sense of Gatalevych}  to $a\in K$  if the following conditions hold:
\begin{itemize}
 \item[(i)]  there exist  elements $s,t\in K$ such that $b=st$ and  $tK  +aK=K$;
 \item[(ii)] $s'K  +aK  \ne K$ for each  $s'\in K\setminus U(K)$ such that $sK \subset s'K\ne K$.
 \end{itemize}

The shortcomings of this  definition are demonstrated by  the next example:

\textbf{Example 3}. Let  $R$ be a commutative PID, and let $a,d,c\in R\setminus\{U(R)\cup \{0\}\}$ be  pairwise relatively prime indecomposable elements. Let
\[
\begin{split}
A:&= \diag({a}, {a^{2} dc}),\quad P_A=I,\quad   B:= \left[\begin{array}{cc}  {1} & {0} \\ {d} & {d^{3} c^{2} } \end{array}\right]  = \left[\begin{array}{cc}  {1} & {0} \\ {d} & {1} \end{array}\right]   \left[\begin{array}{cc}  {1} & {0} \\ {0} & {d^{3} c^{2} } \end{array}\right] , \quad   P_{B}= \left[\begin{array}{cc}  {1} & {0} \\ {-d} & {1} \end{array}\right],\\
A_1:&=\diag({a}, {a^{2} c}),\quad \qquad T:=\left[\begin{array}{cc}  {1} & {0} \\ {1} & {1} \end{array}\right]\cdot \diag(1,{d^{2} c^{2}}), \qquad  S:=\diag(1,d).
\end{split}
\]
It is easy to check that  $A=SA_{1}$ and  $B=ST$. Since $(A,T)_{l} =I$  (see Fact \ref{F:1}(ii)), the decomposition  $B=ST$ satisfies   the definition of Gatalevych.

On the other hand,
\[
B=S_{1} T_{1}=\big(P^{-1}_{S_{1}} \cdot \diag(1,d^{3}) \cdot Q^{-1}_{S_{1}}\big)
\cdot
\big(P^{-1}_{T_{1}} \cdot \diag(1,c^{2}) \cdot Q^{-1}_{T_{1}}\big),
\]
where
\[
\begin{split}
S_1&=\left[\begin{array}{cc}  {1} & {0} \\ d^{3}+{d} & {d^{3} } \end{array}\right], \quad
P_{S_{1} } = \left[\begin{array}{cc}  {1} & {0} \\ -{d} & {1} \end{array}\right],\quad  Q_{S_{1} } = \left[\begin{array}{cc}  {1} & {0} \\ -{1} & {1} \end{array}\right],\\
T_1&= \left[\begin{array}{cc}  {1} & {0} \\ -1 & {c^{2} } \end{array}\right],\qquad\;
P_{T_{1}}= \left[\begin{array}{cc}  {1} & {0} \\ 1 & {1} \end{array}\right],\qquad\;  Q_{T_{1} } =I.
\end{split}
\]
Each left divisor of  $S_{1}$ has   a  nontrivial left common divisor with   $A$ by Theorem  \ref{T:2} and  $(A,T_{1} )_{l} =I$ by Fact \ref{F:1}(ii),  so the decomposition $B=S_{1} T_{1}$ also satisfies Gatalevych's   definition.
However,  $S$ is the left divisor of $S_{1}$, because  $S_{1}  = S  \left[\begin{array}{cc}  {1} & {0} \\ 1& d^2 \end{array}\right]$.

It should be noted that the decompositions $B = ST$ and $B = S_1T_1$ also exhibit another undesirable property.
Let us consider the cosets $S \GL_2(R)$ and $S_1 \GL_2(R)$, i.e., the sets of all right strongly associated matrices to the matrices $S$ and $S_1$, respectively.
According to Fact~\ref{F:1}(ii), each left divisor of the matrices from $S\; \GL_2(R)$ and $S_1\; \GL_2(R)$ has a nontrivial left common divisor with the matrix $A$.
However, if $U, V \in \GL_2(R)$ and
\[
B = (SU)(U^{-1}T) = (S_1V)(V^{-1}T_1),
\]
then it does not necessarily follow that $(A, U^{-1}T)_{l} = I$ and $(A, V^{-1}T_1)_{l} = I$.
Indeed, if
\[
U:= \left[\begin{array}{cc}  {1} & {0} \\ 1-d & {1} \end{array}\right]
\quad\text{and}\quad  V:= \left[\begin{array}{cc}  {1} & {0} \\ -1 & {1} \end{array}\right],
\]
then $T':=U^{-1}T= \left[\begin{array}{cc}  {1} & {0} \\ d & {1} \end{array}\right]\cdot \diag(1,d^{2}c^2)$ and  $T_1':=V^{-1}T_1=  \diag(1, c^2)$. It is easy to see that  $(A, T')_{l} \neq I$ and $(A, T_1')_{l} \neq I$. {\hfill \rm  $\diamond$}


\begin{thebibliography}{10}

\bibitem{Anderson_Valdes_Leon}
D.~D. Anderson and S.~Valdes-Leon.
\newblock Factorization in commutative rings with zero divisors.
\newblock {\em Rocky Mountain J. Math.}, 26(2):439--480, 1996.

\bibitem{ZZ1}
V.~A. Bovdi and V.~P. Shchedryk.
\newblock Generating solutions of a linear equation and structure of elements
  of the {Z}elisko group.
\newblock {\em Linear Algebra Appl.}, 625:55--67, 2021.

\bibitem{ZZ2}
V.~A. Bovdi and V.~P. Shchedryk.
\newblock Generating solutions of a linear equation and structure of elements
  of the {Z}elisko group {II}.
\newblock {\em Quaest. Math.}, 46(9):1789--1798, 2023.

\bibitem{Anderson_Valdes_Leon_2}
S.~Chun, D.~D. Anderson, and S.~Valdez-Leon.
\newblock Reduced factorizations in commutative rings with zero divisors.
\newblock {\em Comm. Algebra}, 39(5):1583--1594, 2011.

\bibitem{Gatalevich}
A.~I. Gatalevich.
\newblock On adequate and generalized adequate duo rings, and duo rings of
  elementary divisors.
\newblock {\em Mat. Stud.}, 9(2):115--119, 223, 1998.

\bibitem{Gillman_Henriksen}
L.~Gillman and M.~Henriksen.
\newblock Rings of continuous functions in which every finitely generated ideal
  is principal.
\newblock {\em Trans. Amer. Math. Soc.}, 82:366--391, 1956.

\bibitem{Gillman_Henriksen_2}
L.~Gillman and M.~Henriksen.
\newblock Some remarks about elementary divisor rings.
\newblock {\em Trans. Amer. Math. Soc.}, 82:362--365, 1956.

\bibitem{Helmer}
O.~Helmer.
\newblock The elementary divisor theorem for certain rings without chain
  condition.
\newblock {\em Bull. Amer. Math. Soc.}, 49:225--236, 1943.

\bibitem{Kaplansky}
I.~Kaplansky.
\newblock Elementary divisors and modules.
\newblock {\em Trans. Amer. Math. Soc.}, 66:464--491, 1949.

\bibitem{Mon}
V.~Shchedryk.
\newblock {\em Arithmetic of matrices over rings}.
\newblock https://doi.org/10.15407/akademperiodika.430.278. Akademperiodyka,
  Kyiv,
  https://www.researchgate.net/publication/353979871\_arithmetic\_of\_matrices,
  2021.

\bibitem{UMZh2015}
V.~P. Shchedryk.
\newblock Bezout rings of stable range 1.5.
\newblock {\em Ukrainian Math. J.}, 67(6):960--974, 2015.
\newblock Translation of Ukra\"{\i}n. Mat. Zh. {{\bf{6}}7} (2015), no. 6,
  849--860.

\bibitem{Zabavskii}
B.~V. Zabavskii.
\newblock Generalized adequate rings.
\newblock {\em Ukra\"{\i}n. Mat. Zh.}, 48(4):554--557, 1996.

\bibitem{Zabavsky_Gatalevych_2}
B.~V. Zabavsky and A.~Gatalevych.
\newblock Diagonal reduction of matrices over commutative semihereditary
  {B}ezout rings.
\newblock {\em Comm. Algebra}, 47(4):1785--1795, 2019.

\bibitem{Zelisko}
V.~R. Zelisko.
\newblock Construction of a class of invertible matrices.
\newblock {\em Mat. Metody i Fiz.-Mekh. Polya}, (12):14--21, 120, 1980.

\end{thebibliography}
\end{document}